\documentclass[12pt]{amsart}

\usepackage{amscd,amsthm,amsfonts,amssymb,amsmath,mathrsfs,supertabular}
\usepackage[colorlinks=true]{hyperref}
\usepackage{fullpage}
\usepackage{tikz-cd}
\usepackage[all]{xy}
\usepackage{appendix}

\newcommand{\HK}{(L^\times /L^{\times p})_{\N \equiv 1}}
\newcommand{\HKv}{(L_{v}^\times /L_{v}^{\times p})_{\N \equiv 1}}
\newcommand{\HO}{(\CO_L^\times /\CO_L^{\times p})_{\N \equiv 1}}
\newcommand{\HOv}{(\CO_{L_v}^\times /\CO_{L_v}^{\times p})_{\N \equiv 1}}

 \newcommand{\BF}{{\mathbb {F}}}

\newcommand{\BQ}{{\mathbb {Q}}}

 \newcommand{\BZ}{{\mathbb {Z}}}

\newcommand{\CO}{{\mathcal {O}}}

\newcommand{\CU}{{\mathcal {U}}}

 \newcommand{\RH}{{\mathrm {H}}}

 \newcommand{\fp}{{\mathfrak{p}}}

\newcommand{\Div}{{\mathrm{Div}}} \renewcommand{\div}{{\mathrm{div}}}
\newcommand{\End}{{\mathrm{End}}}

\newcommand{\Gal}{{\mathrm{Gal}}} 

\newcommand{\Hom}{{\mathrm{Hom}}}

\renewcommand{\Im}{{\mathrm{Im}}}

\newcommand{\Ker}{{\mathrm{Ker}}}

\renewcommand{\mod}{\ \mathrm{mod}\ }

\newcommand{\Sel}{{\mathrm{Sel}}}

\newcommand{\ov}{\overline}

\newcommand{\lra}{\longrightarrow}
\newcommand{\ra}{\rightarrow}

\newcommand{\N}{\mathrm{N}}

                \newcommand{\Res}{\mathrm{Res}}

\theoremstyle{plain}
\newtheorem{thm}{Theorem}[section] 
\newtheorem{lem}[thm]{Lemma}  \newtheorem{prop}[thm]{Proposition}
 \newtheorem{defn}[thm]{Definition}
\newtheorem{hyp}[thm]{Hypothesis}

\theoremstyle{remark} \newtheorem{remark}{Remark}[section]
\theoremstyle{remark} 
\theoremstyle{remark} \newtheorem{ex}{Example}

\numberwithin{equation}{section}

\begin{document}
	\title[]{Bounding Selmer Groups of Superelliptic Jacobians via Class Groups}
	\author{Pengfei Wang}
	\begin{abstract}
Let $K$ be a number field containing a primitive $p$-th root of unity $\zeta_p$. Let $f(x)\in K[x]$ be a monic integral polynomial, and let $f_0$ denote its radical. Let $C/K$ be the superelliptic curve defined by $y^p=f(x)$, and let $J$ be its Jacobian variety. The variety $J$ admits multiplication by $\zeta_p$ over $K$; in particular, the endomorphism induced by $\Pi:=1-\zeta_p$ gives an isogeny of $J$ over $K$. Let $\Sel_\Pi(J)$ denote the Selmer group associated to $\Pi$. Under suitable hypotheses, we obtain bounds for $\Sel_\Pi(J)$ in terms of the $p$-torsion subgroup of the class group of $L:=K[x]/(f_0)$. Several examples illustrating the results are also discussed.
\end{abstract}
	
	\address{School of Mathematical Sciences, Tongji University, Shanghai 200092}
	\email{2311745@tongji.edu.cn}
	
	\maketitle
	\tableofcontents
\section{Introduction}
It has been known for decades that the Selmer group of an abelian variety is related to a certain class group. Schaefer provided an effective $2$-descent method for the Jacobians of hyperelliptic curves \cite{RN8}; he later related the $2$-Selmer group of the Jacobian of the hyperelliptic curve $y^2=f(x)$ over a number field $K$ to the class group of the \'etale algebra $K[x]/(f)$ \cite{RN30}. Poonen and Schaefer \cite{PoonenSchaefer1997} extended  Schaefer's method \cite{RN8} to superelliptic curves. Schaefer also fully developed the descent method for the Jacobian of a curve using functions on the curve \cite{RN31}. For certain rational elliptic curves, Li obtained an exact relation between their $2$-Selmer groups and the $2$-class groups of the associated cubic extensions\cite{RN33}. For elliptic curves over number fields, Barrera Salazar, Pacetti, and Tornar{\'i}a \cite{RN27} obtained bounds for their $2$-Selmer groups in terms of the $2$-class groups of the associated cubic extensions; subsequently, they obtained analogous results for Jacobians of hyperelliptic curves \cite{RN34}. In \cite{Stoll1,Stoll2}, under suitable hypotheses, Stoll established Selmer-rank formulae for Jacobians of the hyperelliptic curves $y^2=x^\ell+A$ over the $\ell$-th cyclotomic fields $K$ in terms of class numbers of $K(\sqrt{A})$. Shu \cite{Shu} obtained bounds for the Selmer groups of Jacobians of Fermat curves in terms of the class groups of cyclotomic fields, and established an exact equality relating Selmer ranks and class numbers under certain conditions.

In this paper, we establish a bound on the Selmer groups of the Jacobians of superelliptic curves in terms of the class groups of their associated étale algebras.

Let \(p\) be an odd prime, and let \(K\) be a number field containing a primitive \(p\)-th root of unity \(\zeta_p\). Let \(\mathcal O_K\) denote the ring of integers of \(K\). Let \(f \in \mathcal O_K[x]\) be a monic \(p\)-th power-free polynomial with decomposition
\[
f = f_1^{m_1} \cdots f_r^{m_r},
\]
where \(1 \leq m_i \leq p-1\) and the \(f_i\) are distinct monic irreducible polynomials over \(K\). Set \(f_0 = f_1 \cdots f_r\), \(d = \deg f_0\) and \(d^\prime = \deg f\), and assume that \(p \nmid dd^\prime\). Let \(C/K\) be the smooth projective curve with affine model
\[
C : y^p = f(x),
\]
and let \(J = \operatorname{Jac}(C)\). By Riemann-Hurwitz, the genus of \(C\) is
$
g = \frac{(p-1)(d-1)}{2}.
$ The automorphism \((x,y) \mapsto (x, \zeta_p y)\) induces an action of \(\mathbb Z[\zeta_p]\) on \(J\). Put
\[
\Pi := 1 - \zeta_p \in \End_K(J),
\]
and let $J[\Pi]$ be its kernel. 

Let $\Sigma$ be the set of all places of $K$, and for each $v\in \Sigma$ denote $K_v$ to be the corresponding local field. Define the $\Pi$-Selmer group as
\[\Sel_\Pi(J):=\Ker\left(\RH^1(K,J[\Pi]))\ra\prod_{v\in \Sigma}\RH^1(K_v,J)\right).\]
Set 
\[L := K[x]/(f_0(x)) \simeq \prod_i L_i,\] where $L_i$ are number fields. Let \(\operatorname{Cl}(L) = \prod_i \operatorname{Cl}(L_i)\) be the class group of $L$, and define
\[
\operatorname{Cl}(L)[p]_{N \equiv 0}
:=
\Ker\!\left(
N_{L/K} : \operatorname{Cl}(L)[p]
\longrightarrow \operatorname{Cl}(K)[p]
\right).
\]

For each place \(v \mid p\), following the method of \cite{RN27,RN34}, an explicit contribution $W_v$ to the local Kummer image is constructed using formal-group parametrization (cf. Section~\ref{subsec:natural-local-contribution}). 

\begin{hyp}\label{hyp:local}
The following conditions hold.
\begin{enumerate}
\item $(p,dd^\prime)=1$.
\item For every finite place \(v\in \Sigma\), the polynomial \(f_0\) satisfies \((\dagger)_v\) (see Definition~\ref{dagger}).
\item For every place \(v\mid p\), after an integral translation, the coefficient of
\(x^{d-1}\) in \(f_0\) lies in the maximal ideal
\(\mathfrak p_v\) of \(\mathcal O_v\), where \(\mathcal O_v\) is the
ring of integers of \(K_v\).
\end{enumerate}
\end{hyp}

\begin{remark}
For item (2), it follows from Proposition~\ref{root condition} that if $v$ does not divide the discriminant of $f_0$, then $(\dag)_v$ holds.
\end{remark}
The main result is the following.

\begin{thm}\label{thm:intro-main}
Assume Hypothesis~\ref{hyp:local}. Then
\[
\begin{aligned}
\dim_{\mathbb F_p}\operatorname{Cl}(L)[p]_{N\equiv0}
-\sum_{v\mid p}
\left(r_v-1-\dim_{\mathbb F_p}W_v\right)
&\leq
\dim_{\mathbb F_p}\operatorname{Sel}_{\Pi}(J/K) \\
&\leq
\dim_{\mathbb F_p}\operatorname{Cl}(L)[p]_{N\equiv0}
+g[K:\mathbb Q(\zeta_p)].
\end{aligned}
\]
\end{thm}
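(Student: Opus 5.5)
We identify $\Sel_\Pi(J)$ inside the group $\HK$ via the Poonen--Schaefer descent map. We then compare it with the ``unramified'' subgroup cut out by the class group, treating places away from $p$ and places above $p$ separately.

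\textbf{Step 1 (global embedding).} Because $p\nmid d$, $J[\Pi]$ is identified with the norm-one (in the appropriate sense) part of $\Res_{L/K}\mu_p$ modulo the diagonal. Hence $\RH^1(K,J[\Pi])$ injects into $\HK$, compatibly with the local maps $\RH^1(K_v,J[\Pi])\to\HKv$. Here we use that the $x-T$ map of Poonen--Schaefer computes the connecting homomorphism, with $(p,dd')=1$ removing the usual $\mu_p$ and kernel ambiguities. Thus $\Sel_\Pi(J)$ becomes the subgroup of $\HK$ whose localizations lie in the images $\delta_v(J(K_v)/\Pi J(K_v))$.

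\textbf{Step 2 (places $v\nmid p$).} Using $(\dagger)_v$ from Definition~\ref{dagger} and Proposition~\ref{root condition}, we show that for finite $v\nmid p$ the local image equals the unramified subgroup $\HOv$. Both sides have dimension $\dim J(K_v)[\Pi]$ when $v\nmid p$. The inclusion of the image into the unramified part follows from $(\dagger)_v$, which gives that $x-T$ on points has valuations divisible by $p$ componentwise. Equality then follows by the dimension count $\dim J(K_v)/\Pi = \dim J(K_v)[\Pi]$. Archimedean places are complex because $\zeta_p\in K$, so they impose no condition.

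Define
\[
H:=\{a\in\HK : a\in \HOv \text{ for all } v\},
\]
the everywhere-unramified norm-one classes. The standard sequence $0\to \CO_L^\times/\CO_L^{\times p}\to H_{\mathrm{unr}}\to \operatorname{Cl}(L)[p]\to 0$, restricted to norm-one classes and modulo $K^\times$, gives $\dim H$ in terms of $\dim\operatorname{Cl}(L)[p]_{N\equiv0}$ plus a unit contribution. By Dirichlet applied to $L$ and $K$, with all archimedean places complex, the unit contribution has dimension $\sum_i(r_2(L_i)+1)-(r_2(K)+1)$ up to the $\mu_p$ terms. This equals $\frac{(d-1)[K:\BQ]}{2}$ adjusted, which is $g[K:\BQ(\zeta_p)]$ since $[K:\BQ]=(p-1)[K:\BQ(\zeta_p)]$.

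\textbf{Step 3 (upper bound).} For $v\mid p$ the local image is contained in $\HKv$, but we only need the trivial bound. Elements of $\Sel_\Pi$ are unramified away from $p$. At $v\mid p$, the valuation condition should still hold because $J$ has a model on which $x-T$ of a point has valuation divisible by $p$ up to the norm condition; this is the point where $(p,d)=1$ is used again. So $\Sel_\Pi\subseteq H$, and
\[
\dim\Sel_\Pi\le \dim\operatorname{Cl}(L)[p]_{N\equiv0}+g[K:\BQ(\zeta_p)].
\]

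\textbf{Step 4 (lower bound).} Consider the subgroup $H'\subseteq H$ of classes whose $p$-power part at $v\mid p$ lies in $W_v$. Since $W_v\subseteq\Im\delta_v$ by the construction in Section~\ref{subsec:natural-local-contribution}, we have $H'\subseteq\Sel_\Pi$. Let $r_v-1$ be the dimension of the relevant unit quotient $\HOv$ modulo the part forced by the principal-class condition, as in \cite{RN27,RN34}. Then the codimension of $H'$ in the kernel lift of $\operatorname{Cl}(L)[p]_{N\equiv0}$ is at most $\sum_{v\mid p}(r_v-1-\dim W_v)$. Concretely, we map the subgroup of $H$ consisting of ideals of $p$-th-power norm-one type to $\prod_{v\mid p}\HOv/W_v$ and bound the kernel from below.

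\textbf{Main obstacle.} The main obstacle is Step 4 together with the $v\mid p$ part of Step 3. We must identify $r_v$ correctly, as the dimension of the relevant local unit group modulo the image of global constants, and justify $W_v\subseteq\Im\delta_v$. The latter is where Hypothesis~\ref{hyp:local}(3) enters: it makes the formal-group parametrization near the point at infinity give an explicit $\BF_p$-space of local points whose $x-T$ images span $W_v$. I would first follow the hyperelliptic computations of \cite{RN34} verbatim, replacing $2$ by $\Pi$.
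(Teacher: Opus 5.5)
Your upper bound is essentially the paper's argument. Your $H$ is the paper's $H^1_{\mathcal F_{\mathrm{unit}}}(K,J[\Pi])$, and bounding it through the unit/class-group sequence is Proposition~\ref{prop:unit-ur-index} together with Lemma~\ref{lem:unit-norm-kernel}. Your unit count is off by $r-1$, though: since $\zeta_p\in L_i$, one has $\dim_{\BF_p}\CO_{L_i}^\times/\CO_{L_i}^{\times p}=r_2(L_i)$, so the correct count is $\sum_i r_2(L_i)-r_2(K)=(d-1)[K:\BQ]/2$.

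The lower bound in Step 4 has a genuine gap, and it cannot be obtained inside $H$ at all. One has exactly $\dim H=\dim\operatorname{Cl}(L)[p]_{\N\equiv0}+g[K:\BQ(\zeta_p)]$. At $v\mid p$, however, the local condition of $H$ is $\HOv$, which has dimension $r_v-1+2gd_v$ by Proposition~\ref{dimension}(3). So the kernel of $H\to\prod_{v\mid p}\HOv/W_v$ is only guaranteed to have dimension $\dim\operatorname{Cl}(L)[p]_{\N\equiv0}-\sum_{v\mid p}(r_v-1-\dim W_v)-g[K:\BQ(\zeta_p)]$, using $\sum_{v\mid p}d_v=[K:\BQ(\zeta_p)]$. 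That is exactly $g[K:\BQ(\zeta_p)]$ short of the claim. Your proposed repair, reading $r_v-1$ as $\dim\HOv$ ``modulo the part forced by the principal-class condition'', is not available. The statement fixes $r_v$ as the number of irreducible factors of $f_0$ over $K_v$, and no global condition of that kind cuts $\HOv$ down. Relatedly, $H$ is not ``everywhere unramified'': at $v\mid p$ a unit class typically gives a ramified Kummer extension, and this conflation is what hides the gap.

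The missing idea is to work in the smaller group $H^1_{\mathcal F_{\mathrm{ur}}}(K,J[\Pi])$ of classes $[\alpha]$ with $L(\sqrt[p]{\alpha})/L$ unramified at every place, including those above $p$.
\begin{itemize}
\item Its dimension is still exactly $\dim\operatorname{Cl}(L)[p]_{\N\equiv0}$. This comes from Kummer duality with the Hilbert class field of $L$, not from lifting ideal classes: corestriction is adjoint to extension of ideals, and $p\nmid d$ splits off $\iota_{L/K}\operatorname{Cl}(K)$ (Proposition~\ref{prop:unramified-classgroup}).
\item Its components at $v\mid p$ lie in $\CU_v=\{[1+\pi^p\beta]\}$, which contains $W_v$ and has dimension exactly $r_v-1$ by the trace criterion of Proposition~\ref{part:U4cardinality}. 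That is where $r_v-1$ comes from.
\item Then $\ker\bigl(H^1_{\mathcal F_{\mathrm{ur}}}(K,J[\Pi])\to\prod_{v\mid p}\CU_v/W_v\bigr)\subset\Sel_\Pi(J)$ gives the stated lower bound.
\end{itemize}

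Separately, you have not established the inclusion $\Im\delta_v\subset\HOv$. At $v\mid p$, your appeal to a good model and to $(p,d)=1$ is not an argument. The paper instead runs one ideal-theoretic argument (Proposition~\ref{thm:integrality}, after Stoll) at every finite place, with $(\dagger)_v$ as the input. At $v\nmid p$, your pointwise count only gives $p\mid v(f_{v,i}(a))$, which is divisibility of norm valuations, not of the components $a-T_{v,i}$. For instance, suppose $L_{v,i}/K_v$ is unramified of degree $p$ and all roots of $f_{v,i}$ are $\equiv0\bmod\fp_v$, which $(\dagger)_v$ permits. Then a point with $x=\varpi c$ can have $\varpi c-T_{v,i}$ of normalized valuation $1$ in $L_{v,i}$. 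So this step needs real input on the orders $\CO_v[x]/(f_{v,i})$, not just on valuations of norms. In the paper's argument, this is the passage from the norm identity to $J_D^p=(q(T_v))$, which requires $J_D$ to be invertible.
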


\noindent\textbf{Layout.}
In Section~\ref{sec:pi-descent}, following Schaefer, we establish the framework for the $\Pi$-descent of Jacobians of superelliptic curves over any field containing $\zeta_p$ (cf. Proposition~\ref{Weil-Kummer}). In Section~\ref{sec:local-kummer-images}, for each local place $v$, we determine the local Kummer image (cf. Theorem~\ref{localcomputation}) and construct the explicit subspace $W_v$ of the local Kummer image via formal-group parametrization. In Section~\ref{sec:selmer-class-group}, with the local Kummer images obtained in Section~\ref{sec:local-kummer-images} at hand, we construct auxiliary Selmer structures to bound $\operatorname{Sel}_{\Pi}(J/K)$ (cf. Theorem~\ref{thm:selmer-classgroup-bound}). Finally, we present several explicit examples.

\noindent {\bf Acknowledgements.}
This work constitutes part of the author's doctoral dissertation, carried out under the supervision of Jie Shu. The author is deeply grateful to him for his guidance, encouragement, and numerous valuable suggestions.

\section{The $\Pi$-descent over general fields}
\label{sec:pi-descent}
Let \(F\) be a field of characteristic
\(0\) containing a primitive \(p\)-th root of unity
\(\zeta_p\), and let \(G_F=\operatorname{Gal}(\overline F/F)\) be the absolute
Galois group of \(F\). 
Let \(f\in F[x]\) be a monic \(p\)-th power-free polynomial, written as
$
f=\prod_{i=1}^r f_i^{m_i},
1\leq m_i\leq p-1,
$
where the \(f_i\) are distinct monic irreducible polynomials over \(F\).
Set
$
f_0=\prod_{i=1}^r f_i,
d=\deg f_0,
d'=\deg f=\sum_{i=1}^r m_i\deg f_i,
$
and assume that 
$
p\nmid dd^\prime.
$
The polynomial \(f_0\) determines the finite étale \(F\)-algebra
$
L=F[T]:=F[x]/(f_0(x)),
$
where \(T\) denotes the image of \(x\). After base change to
\(\overline F\), one has
$
L\otimes_F\overline F
\simeq
\overline F[x]/(f_0(x))
\simeq
\prod_{i=1}^d\overline F.
$

Let \(C/F\) be the smooth projective model of the affine curve
$
C: y^p=f(x),
$
and let \(J=\operatorname{Jac}(C)\) be its Jacobian. Since \(p\nmid d'\), there is a
unique point \(\infty\in C(F)\) above infinity. Since $C$ admits a $F$-rational point, the Jacobian $J(F)$ can be identified with the group of divisor classes of degree zero on $C$ defined over $F$ (cf.~\cite[Lemma~1]{RN481}); that is,
\[
J(F) \cong \Div^0(C)(F)/\sim,
\]
where $\sim$ denotes linear equivalence.

Let \(\alpha_1,\ldots,\alpha_d\) be the roots of \(f_0\) in
\(\overline F\) and write
$
R_i=(\alpha_i,0)
(1\leq i\leq d)
$
for the corresponding ramification points. Then we set $f(x)=\prod_{i=1}^d (x-\alpha_i)^{s_i}$. The covering
$
x:C\longrightarrow\mathbb P^1_F
$
is totally ramified at \(R_1,\ldots,R_d\) and, since \(p\nmid d'\), at
\(\infty\). The Riemann--Hurwitz formula gives
$
g(C)=\frac{(p-1)(d-1)}{2}.
$
The automorphism \((x,y)\mapsto (x,\zeta_p y)\) of \(C\) induces an automorphism of \(J\), which we identify with \(\zeta_p\) and regard
$
\mathbb Z[\zeta_p]\subset \operatorname{End}_F(J).
$
Put
\[
\Pi:=1-\zeta_p\in \operatorname{End}_F(J),
\]
and let $J[\Pi]$ be its kernel. We have the following proposition.
	\begin{prop}\label{prop:cohom}
		\begin{enumerate}
			\item The torsion group $J[\Pi](\ov{F})$ is an $\BF_p$-space of dimension $d-1$ with 
			$[R_1-\infty],[R_2-\infty],\cdots,[R_{d-1}-\infty]$ as a basis.
			\item As a finite group scheme, $J[\Pi]\cong \Res^1_{L/F}\mu_p$ over $F$.
			\item There is a canonical isomorphism
			\[
			\RH^1(F,J[\Pi]) \cong (L^\times/L^{\times p})_{\N \equiv 1},
			\]
			where $(L^\times/L^{\times p})_{\N \equiv 1}$ denotes the subgroup of
			classes whose norm lies in $F^{\times p}$.
		\end{enumerate}
	\end{prop}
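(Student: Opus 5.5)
We prove the three parts in order, with the divisor computations on $C$ as the basic tool.

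\emph{Part (1).} First we show that each class $[R_i-\infty]$ is killed by $\Pi$. The automorphism $\zeta_p\colon (x,y)\mapsto(x,\zeta_p y)$ fixes every ramification point $R_i$ and also $\infty$. Hence $\zeta_p[R_i-\infty]=[R_i-\infty]$, that is, $\Pi[R_i-\infty]=0$.

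Next we bound $\dim J[\Pi](\ov F)$ from above. Since $\Pi$ is a prime of $\BZ[\zeta_p]$ of norm $p$ and $J[p]$ is free of rank $2g/(p-1)=d-1$ over $\BZ[\zeta_p]/p$, we get $J[\Pi]\cong (\BZ[\zeta_p]/\Pi)^{d-1}$. So $J[\Pi]$ has dimension exactly $d-1$ over $\BF_p$. This uses the standard fact that $T_\ell J$ is free over $\BZ_\ell\otimes\BZ[\zeta_p]$; alternatively one can count fixed points via Lefschetz.

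It remains to show that the $d$ classes $[R_i-\infty]$ span a space of dimension $d-1$, with exactly one relation among them. The relations come from principal divisors. We have
\[
\div(x-\alpha_i)=p(R_i-\infty), \qquad \div(y)=\sum_i s_i R_i - d'\infty .
\]
We need to show that any principal divisor supported on $\{R_i,\infty\}$ is, modulo $p$ times such divisors, a multiple of $\div(y)$. To see this, let $h$ be a function with divisor supported on these points. Then $h$ is an eigenvector for $\zeta_p$ up to a constant, so $h^p$ is a rational function of $x$ alone. Writing $h=y^k\prod(x-\alpha_i)^{e_i}$ gives the claim. Because $p\nmid d$, the single relation $\sum_i s_i[R_i-\infty]=0$ (coming from $\div(y)$) can be solved for $[R_d-\infty]$. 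Here we must also use that the $s_i$ are units mod $p$. Therefore $[R_1-\infty],\dots,[R_{d-1}-\infty]$ form a basis.

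\emph{Part (2).} We define a map $\BF_p^{\{R_i\}}\to J[\Pi]$ by $e_i\mapsto[R_i-\infty]$. It is surjective and its kernel is the line spanned by $(s_i)$. This map is $G_F$-equivariant, with $G_F$ permuting the roots.

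Now identify $\BF_p$ with $\mu_p$ via the fixed $\zeta_p\in F$. Then $\BF_p^{\{R_i\}}\cong \Res_{L/F}\mu_p$. The Weil pairing, or directly the duality $\Res_{L/F}\mu_p\cong \Hom(\BZ^{\{R_i\}}/p,\mu_p)$, turns the quotient by $(s_i)$ into a subgroup. To identify $J[\Pi]$ with the kernel of the norm $\Res^1_{L/F}\mu_p$, we work with the twisted norm $\prod u_i^{s_i}$. Since $p\nmid d$, the constant vector is not in the sum-zero hyperplane. This gives $\Res_{L/F}\mu_p=\mu_p\oplus\Res^1_{L/F}\mu_p$, and the quotient by any line not inside the hyperplane is isomorphic to $\Res^1$. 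More cleanly, we compose the quotient with the projection onto the sum-zero part. This is well defined because the line spanned by $(s_i)$ has $\sum s_i=d'\ne0$. The main obstacle is checking that this identification is Galois-equivariant while keeping the $s_i$ in the picture. The point is that $s_i$ is constant on Galois orbits, since it equals $m_j$ for the factor $f_j$ containing $\alpha_i$.

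\emph{Part (3).} Take the exact sequence $1\to\Res^1_{L/F}\mu_p\to\Res_{L/F}\mu_p\xrightarrow{\N}\mu_p\to1$. This sequence is split because $p\nmid d$: the diagonal $\mu_p$ composed with the norm is multiplication by $d$. Taking cohomology and using Shapiro's lemma together with Hilbert 90, we get
\[
\RH^1(F,\Res_{L/F}\mu_p)=L^\times/L^{\times p}.
\]
The map to $\RH^1(F,\mu_p)=F^\times/F^{\times p}$ is the norm. The splitting makes $\RH^0(\N)$ surjective, so $\RH^1(F,\Res^1_{L/F}\mu_p)$ injects and equals the kernel of the norm, namely $(L^\times/L^{\times p})_{\N\equiv1}$.
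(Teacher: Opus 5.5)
Your proof is correct, but it takes a different route from the paper, which gives no argument of its own and simply cites Poonen--Schaefer (\S6, \S8). You instead derive everything directly:
\begin{itemize}
\item $\dim_{\BF_p}J[\Pi]=d-1$ comes from the $\BZ_p[\zeta_p]$-module structure of $T_pJ$ (equivalently, $\deg\Pi=p^{2g/(p-1)}$);
\item the relation module $\BZ s+p\BZ^d$ among the classes $[R_i-\infty]$ comes from the eigenfunction argument ($\zeta^*h=\zeta^k h$ forces $h/y^k\in\ov F(x)$);
\item part (2) comes from the $G_F$-stable splitting $\BF_p^d=H\oplus\langle s\rangle$, where $H$ is the sum-zero hyperplane;
\item part (3) comes from Shapiro's lemma, Kummer theory and the splitting of the norm sequence.
\end{itemize}

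This buys a precise accounting of the hypotheses, though you misplace two of them. Solving for $[R_d-\infty]$ in (1) needs only $s_d\not\equiv 0\pmod p$, not $p\nmid d$. In (2), the decomposition along the constant vector is beside the point. What matters there is $\sum_i s_i=d'\not\equiv 0\pmod p$ together with the $G_F$-invariance of $s$; the hypothesis $p\nmid d$ is used only in (3).

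Your route also exposes something the citation hides. Your isomorphism in (2) is projection along $\langle s\rangle$. The Weil-pairing map $w$, which the paper later treats as the canonical identification, instead has image the twisted kernel $\{u:\prod_i u_i^{s_i}=1\}$, since $\sum_i s_i[R_i-\infty]=[\div(y)]=0$. That kernel equals $\Res^1_{L/F}\mu_p$ only when all $m_i$ coincide. So your argument proves (2) and (3) as isomorphisms, which is all they assert.

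However, when the multiplicities differ, no identification with the literal norm-one subgroup can be compatible with $x-T$. The reason is that $\N_{L/F}(x_0-T)=f_0(x_0)$ need not be a $p$-th power, while $\prod_j \N_{L_j/F}(x_0-T_j)^{m_j}=f(x_0)=y_0^p$ always is. For instance, take $y^5=x(x-1)^2$ over $\BQ_{11}$ at $x_0=3$: there $f(3)=12$ is a fifth power but $f_0(3)=6$ is not.
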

	
	\begin{proof}
		See \cite[\S 6, \S 8]{PoonenSchaefer1997}.
        \end{proof}

	\subsection{The explicit $x-T$ map and Kummer map}Following Schaefer \cite{RN31}, we use functions on the curve, to describe explicitly
the descent map associated with the isogeny \(\Pi\). Recall that $L = F[x]/(f_0(x))$. Let $T$ be the image of $x$ in $L$. Then
	\begin{equation*}
		L \otimes_F \bar F = \bar F[T]/(f_0(T))
		\cong \prod_{i=1}^d \bar F[T]/(T-\alpha_i)
		\cong \prod_{i=1}^d \bar F,
	\end{equation*}
	where the isomorphism is induced by
	\[
	T \longmapsto (T,\ldots,T) \longmapsto (\alpha_1,\ldots,\alpha_d),
	\]
	and where $\Gal(\bar F/F)$ acts trivially on $T$.
	
	To each divisor in $\Div^0(C)(F)$ away from the points $\{R_i|i=1,2,...,d\}$ and $\infty$, we can associate an invariant as follows,
	\[\delta_F\left(\sum a_i P_i\right)=(x-T)\left(\sum a_i P_i\right)=\prod (x(P_i)-T)^{a_i}\mod L^{\times p}.\]
	
	By Weil reciprocity (see, e.g., \cite{RN482}), on the principal divisors, one has
	\[
	\delta_F(\div(g))=(x-T)\bigl(\div(g)\bigr)
	= \left(\left(\frac{g(R_1)}{g(\infty)}\right)^p,\cdots,\left(\frac{g(R_d)}{g(\infty)}\right)^p \right) \in L^{\times p}.
	\]
	The last equality follows from 
	$
	\div(x-\alpha_i)=p(R_i- \infty)
	$
	 for each $i$.
	
	Thus we have an induced homomorphism $\delta_F:J(F)\lra L^\times/L^{\times p}$ whose kernel turns out to be the subgroup generated by the classes $\Pi D$ where $D\in J(F)$ (see \cite{RN483}). Then we have an injective map
	\[\delta_F:J(F)/\Pi J(F)\hookrightarrow L^\times/L^{\times p}.\]
	
	Together with \cite[Appendix~A, Lemma~A.1]{Shu}, the following proposition tells how to calculate $\delta_F(D)$ for general divisor classes $D\in J(F)$.

	\begin{prop}\label{X-Tmap}
		Every element of $J(F)$ admits a representative given by a degree-zero divisor defined over $F$ whose support contains neither the point at infinity $\infty$ nor any point with $y$-coordinate equal to $0$. In particular, let
		\[
		D = {}^{\sigma_1}Q + \cdots + {}^{\sigma_m}Q - m\infty,
		\]
		where ${}^{\sigma_1}Q,\ldots,{}^{\sigma_m}Q$ are the Galois conjugates over $F$ of a point
		$Q\in \mathcal C(\bar F)$ with $y(Q)\neq 0$. Then
		\[
		(x-T)([D]) \equiv \prod_{i=1}^m \bigl(x({}^{\sigma_i}Q)-T\bigr)
		\pmod{L^{\times p}}.
		\]
		Now suppose that
		\[
		D = (\alpha_1,0)+\cdots+(\alpha_r,0)-r\infty,
		\]
		where the $\alpha_i$ are conjugate over $F$ (possibly after reordering) and $r<d$. Recall that $f(x)=\prod_{i=1}^d (x-\alpha_i)^{s_i}$. If $u_i$ is an integer such that $u_is_i\equiv 1 \pmod p$, then
		\[
		(x-T)([D]) \equiv
		\prod_{i=1}^r (\alpha_i-T)^{u_i}+
		\prod_{i=r+1}^d (\alpha_i-T)^{-u_i}
		\pmod{L^{\times p}}.
		\]
	\end{prop}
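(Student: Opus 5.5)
The proposition has three parts, and I would treat them separately. For the moving statement I would follow \cite[Appendix~A, Lemma~A.1]{Shu}, which is the standard Riemann--Roch argument. Given a class in $J(F)$, write it as $[E-\deg(E)\infty]$ with $E$ effective and $F$-rational. Then $L(E+n\infty)$ has dimension growing linearly in $n$. Since $F$ is infinite, a generic $F$-rational function $g$ in this space has divisor whose positive part avoids the finitely many points $\infty, R_1,\dots,R_d$. This gives a linearly equivalent divisor $E'-m\infty$ with $E'$ supported away from the $R_i$. To also remove $\infty$ from the support, I would apply the same argument to $-m\infty$, moving it to $\div(h)-m\infty$ for a function $h$ with a pole of order exactly $m$ at $\infty$ (the pole orders at $\infty$ form a numerical semigroup, so they are available for large $m$).

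For the second formula, with $D={}^{\sigma_1}Q+\cdots+{}^{\sigma_m}Q-m\infty$, the plan is to observe that it is just the definition of $\delta_F$ on divisors away from the bad points. One has to justify that $\prod_i(x({}^{\sigma_i}Q)-T)$ really lies in $L$ and not merely in $L\otimes\overline F$. This holds because the product is $G_F$-invariant, being a norm from $F(Q)[T]$. The behaviour at $\infty$ needs a brief check: $x-T$ has a pole at $\infty$ of order divisible by $p$. After normalizing by the uniformizer $x^{a}/y^{b}$ with $ap-bd'=1$, its leading term is a $p$-th power, so the $\infty$ contribution is trivial modulo $L^{\times p}$.

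For the ramification-point case I would work componentwise, using $L\otimes\overline F\cong\prod_j\overline F$ with the $j$-th component of $x-T$ equal to $x-\alpha_j$, and then check Galois-equivariance at the end.
\begin{itemize}
\item For a component $j>r$, the function $x-\alpha_j$ is regular and nonzero on the support of $D$ away from $\infty$. So the component is $\prod_{i\le r}(\alpha_i-\alpha_j)$, up to the sign and exponent conventions in the stated formula.
\item For $j\le r$ the point $R_j$ lies in the support, and I would replace it modulo $pJ$ and principal divisors. From $\div(y)=\sum_i s_iR_i-d'\infty$ and $u_js_j\equiv1 \pmod p$, together with $p(R_i-\infty)=\div(x-\alpha_i)$, we get
\[
R_j-\infty\sim u_j\,\div(y)-u_j\sum_{i\ne j}s_i(R_i-\infty)+p(\cdots).
\]
Evaluating $x-\alpha_j$ on this new divisor avoids $R_j$. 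The $\div(y)$ term is handled by Weil reciprocity: $(x-\alpha_j)(\div y)=y(\div(x-\alpha_j))$ up to the normalization at $\infty$, and this is a $p$-th power. The $p(\cdots)$ term is trivially a $p$-th power. What remains is an explicit product of the differences $(\alpha_i-\alpha_j)$ raised to exponents in $u_j,s_i$. After reducing the exponents mod $p$, this should match the displayed formula.
\end{itemize}

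The main obstacle I expect is this bookkeeping, rather than any conceptual point. It involves getting the exponents $u_i$ versus $-u_i$ right on the two blocks of components, and interpreting the ``$+$'' in the display as this componentwise assembly. It also involves justifying Weil reciprocity when the supports meet at $\infty$, which requires the uniformizer-normalized leading-coefficient check described above. As a final step, I would verify that the components assembled over $j$ are permuted compatibly by $G_F$, so that together they define an element of $L^\times/L^{\times p}$.
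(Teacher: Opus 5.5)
Your moving-lemma argument and your treatment of $D={}^{\sigma_1}Q+\cdots+{}^{\sigma_m}Q-m\infty$ are fine, apart from two sign slips. The uniformizer $x^a/y^b$ needs $bd'-ap=1$. Also, $-m\infty$ should be replaced by $-m\infty-\div(h)=-Z_h$, where $Z_h$ is the zero divisor of $h$, not by $\div(h)-m\infty$.

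For the ramification-point formula, your componentwise route through $\div(y)=\sum_i s_i(R_i-\infty)$ and $\div(x-\alpha_i)=p(R_i-\infty)$ is a legitimate alternative to the paper's, and more elementary. The paper, following Schaefer, moves $D$ with the auxiliary function $y-\prod_{i\le r}(x-\alpha_i)$ and reads every component off one polynomial in $T$. One repair is needed: you cannot evaluate $x-\alpha_j$ on $\div(y)$ by Weil reciprocity, because both functions vanish at $R_j$. You do not need to, since $\delta_F$ is defined on classes. Simply discard $u_j\div(y)$ and the multiples of $\div(x-\alpha_i)$. Then note that $s_i=s_j$ along the Galois orbit, so the terms with $i\le r$, $i\ne j$ have coefficient $1-u_js_j\equiv 0\pmod p$.

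The genuine gap is your final step, ``after reducing the exponents mod $p$, this should match the displayed formula''. It does not match. Write $s$ for the common multiplicity of $\alpha_1,\dots,\alpha_r$ and take $us\equiv 1\pmod p$. Your computation gives component $j>r$ equal to $\prod_{i\le r}(\alpha_i-\alpha_j)$ and component $j\le r$ equal to $\prod_{i>r}(\alpha_i-\alpha_j)^{-us_i}$. The paper's own route via $y-\prod_{i\le r}(x-\alpha_i)$ gives the same answer. The display instead has exponents $u$ and $-u_i$. In general these agree only if $s=1$ and $s_i^2\equiv 1\pmod p$ for all $i>r$, for example when $f$ is squarefree, which is Schaefer's setting. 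So the ``exponent conventions'' you invoke in the $j>r$ bullet hide a real discrepancy.

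Here is a concrete instance. Take $p=3$, $f=x^2(x-a)^2$ with $a\in F^\times\setminus F^{\times 3}$, and $D=R_1-\infty$ with $R_1=(0,0)$. From $\div(y)=2(R_1-\infty)+2(R_2-\infty)$ one gets $R_1-\infty\sim-(R_2-\infty)$. Hence $(x-T)([D])=(a^{-1},-a)$ in $L=F\times F$. The display instead gives $(a^{-2},a^{2})$, and the ratio $(a,-a^{-1})$ is not a cube.

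So what your argument actually proves is the corrected statement $(x-T)([D])\equiv(-1)^r g(T)+h(T)^{p-u}$. Here $g=\prod_{i\le r}(x-\alpha_i)$, $h=\prod_{i>r}(x-\alpha_i)^{s_i}$ and $1\le u\le p-1$. The sum is now literal, because in each component exactly one summand is nonzero. The proposition as displayed should be amended, not matched.
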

	
	\begin{proof}
		Although \cite[Proposition 3.3]{RN31} treats curves defined by $y^p=f(x)$ with $f(x)$ has distinct roots. But the method of {\em loc. cit.} works for $f(x)$ with multiple roots. The proof of the first part goes verbatim as the first half of the proof of \cite[Proposition 3.3]{RN31} by taking $f(x)=\prod_{i=1}^d (x-\alpha_i)^{s_i}$ and $T=(\alpha_1,\alpha_2,...\alpha_d)$. As for the second assertion, one may argue analogously as in the later part of the proof of \cite[Proposition 3.3]{RN31} by taking $g=y-\prod_{i=1}^{r}(x-\alpha_i)$.
	\end{proof}
	
	Recall that the dual map of the isogeny $\Pi:J\ra J$ is given by the complex conjugate $\ov{\Pi}=-\Pi$, and $J[\Pi]=J[\ov{\Pi}]$. Let $e_\Pi:J[\Pi]\times J[\ov{\Pi}]\ra \mu_p$ be the Weil pairing induced by $\Pi$. 
	For each root \(\alpha_i\) of \(f_0\), put
	$
	R_i=[(\alpha_i,0)-\infty]\in J[\overline{\Pi}].
	$
	The Weil pairing associated with
	\(\Pi\) gives a \(G_F\)-equivariant homomorphism
	\[
	w:J[\Pi]\longrightarrow
	\mu_p(L\otimes_F\overline F),
	\qquad
	P\longmapsto
	\bigl(e_\Pi(P,R_i)\bigr)_{i=1}^d.
	\]
	The map $w$ is injective from the non-degeneracy of the Weil pairing. Passing to Galois cohomology, we obtain
	\[
	e:H^1(F,J[\Pi])
	\hookrightarrow
	H^1\!\left(F,\mu_p(L\otimes_F\overline F)\right),
	\]
	given on cocycles by
	\[
	e(\xi)
	=
	\left[
	\sigma\longmapsto
	\bigl(e_\Pi(\xi(\sigma),R_i)\bigr)_{i=1}^d
	\right].
	\]
	Under Kummer theory, the cohomology class represented by
	\[
	\sigma\longmapsto
	\frac{\sigma(\sqrt[p]{a})}{\sqrt[p]{a}}
	=
	\left(\sqrt[p]{a}\right)^{\sigma-1}
	\]
	is sent to the class of \(a\in L^\times/L^{\times p}\). Denote this
	Kummer isomorphism by
	\[
	i:
	H^1\!\left(F,\mu_p(L\otimes_F\overline F)\right)
	\xrightarrow{\sim} L^\times/L^{\times p}.
	\]
	Then we have
	\begin{equation*}
		i\circ e:
		H^1(F,J[\Pi])
		\hookrightarrow
		L^\times/L^{\times p}.
	\end{equation*}
	
	\begin{prop}\label{Weil-Kummer}
		The maps \(\delta_F\) and
		$
		i\circ e\circ\kappa_{F,\Pi}
		$
		coincide as maps from \(J(F)/\Pi J(F)\) to
		\(L^\times/L^{\times p}\). More precisely, the diagram
		\[
		\xymatrix{
			J(F)/\Pi J(F)
			\ar[r]^{\kappa_{F,\Pi}}
			\ar[dr]_{\delta_F}
			&
			H^1(F,J[\Pi])
			\ar[d]^{\,i\circ e}
			\\
			&
			L^\times/L^{\times p}
		}
		\]
		is commutative.
	\end{prop}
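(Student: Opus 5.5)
We follow Schaefer's argument for the analogous compatibility in \cite{RN31} (cf.\ also \cite[\S 6]{PoonenSchaefer1997}). The reduction is to the generalized Weil pairing formula of \cite{RN482}: for $P\in J[\Pi]$ and $R\in J[\overline{\Pi}]$, write $\overline{\Pi}^{*}$-type descriptions via functions. Concretely, for $R_i=[(\alpha_i,0)-\infty]$ we have $\div(x-\alpha_i)=p\bigl((\alpha_i,0)-\infty\bigr)$. Let $D\in\Div^0(C)(F)$ represent a class in $J(F)$, with support away from the $R_i$ and $\infty$; such a representative exists by Proposition~\ref{X-Tmap}. Choose $D'\in\Div^0(C)(\overline F)$ with $\Pi[D']=[D]$. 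Then $\kappa_{F,\Pi}([D])$ is represented by $\sigma\mapsto[{}^\sigma D'-D']\in J[\Pi]$.

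The key step is an explicit formula for $e_\Pi(P,R_i)$ in terms of the function $x-\alpha_i$. Since $\overline{\Pi}=-\Pi$ and $(1-\zeta_p)$ acts via pullback by the automorphism $\zeta\colon(x,y)\mapsto(x,\zeta_p y)$, we write $\Pi D'=D'-\zeta^*D'$ (up to the convention for the action), so that $D\sim D'-\zeta^*D'$. Pulling back the function $x-\alpha_i$ along $\zeta$ leaves it invariant. The standard formula for the pairing attached to an isogeny (as in \cite[Prop.~3.4]{RN31} for $p=2$ and the general version in \cite{PoonenSchaefer1997}) then gives
\[
e_\Pi\bigl([{}^\sigma D'-D'],R_i\bigr)=\frac{\sigma\bigl(h_i(D')\bigr)}{h_i(D')},
\]
where $h_i$ is a function with $\div(h_i)$ equal to the pullback of $p\,R_i$-type divisors, and where $h_i(D')^p=(x-\alpha_i)(D)$ up to $F^{\times}$ $p$-th powers. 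One checks this by writing $h_i(D')^{\,p}$ as $(x-\alpha_i)(\Pi D')$ using invariance of $x$ under $\zeta$ and Weil reciprocity to dispose of the principal divisor $D-\Pi D'$, exactly as in the computation displayed above for $\delta_F(\div g)$. Thus the cocycle $\sigma\mapsto(e_\Pi(\xi(\sigma),R_i))_i$ is the Kummer cocycle of $\bigl((x-\alpha_i)(D)\bigr)_i=(x-T)(D)$. Applying $i$ and the Kummer isomorphism yields $i\circ e\circ\kappa_{F,\Pi}([D])=\delta_F([D])$.

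The main obstacle is pinning down the explicit Weil-pairing formula with correct normalizations. The issues are the sign and orientation of $\Pi$ versus $\overline{\Pi}$, and whether the relation is $\zeta^*$ or $\zeta_*$. These choices could produce $\delta_F$ versus $\delta_F^{-1}$, or a twist by an automorphism of $\mu_p$. A twist is harmless only if it is the same for all $i$, so the diagram commutes possibly after fixing conventions. I would first verify the formula on the basis $[R_j-\infty]$ of $J[\Pi]$ from Proposition~\ref{prop:cohom}, comparing with the second formula of Proposition~\ref{X-Tmap}, to calibrate the normalization. I would then extend to all of $J(F)$ by linearity and by independence of the choice of representative, which is guaranteed because both maps are homomorphisms that vanish on principal divisors and on $\Pi J(F)$.
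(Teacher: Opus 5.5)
The paper's proof does no computation: it applies Schaefer's Theorem~2.3 in \cite{RN31} with $\phi=\Pi$, where the functions attached to $R_i\in J[\overline{\Pi}]$ are $x-\alpha_i$ because $\div(x-\alpha_i)=pR_i$. If you had simply invoked that theorem, you would have the paper's proof. The self-contained derivation you sketch instead has a genuine gap at its key step.

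First, the function $h_i$ is never pinned down. On $C$ there is no meaningful ``pullback of $pR_i$'' under $\Pi$, and $\zeta^*$ fixes $pR_i$. If instead $h_i$ is a function on $J$ with divisor $\Pi^*\Theta_{R_i}$, then your displayed identity is just the definition of $e_\Pi$, and $D'$ must be a point of $J$. Identifying $h_i(D')^p$ with $(x-\alpha_i)(D)$ modulo $p$-th powers is then a nontrivial comparison between functions on $J$ and functions on $C$, which is exactly what Schaefer's theorem supplies.

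Second, the verification you propose fails outright. Because $x\circ\zeta=x$, one has $(x-\alpha_i)(\Pi D')=1$ identically, so equating $h_i(D')^p$ with $(x-\alpha_i)(\Pi D')$ would make the cocycle trivial. All the information sits in the principal divisor $D-\Pi D'=\div(g)$, which you propose to dispose of. Weil reciprocity gives $(x-\alpha_i)(D)=\bigl(g(R_i)/g(\infty)\bigr)^p$, so the relevant $p$-th root depends on $D'$ through $g$, not through a fixed function evaluated at $D'$.

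What is missing is the curve-level formula $e_\Pi(E,R_i)=k(R_i)/k(\infty)$ whenever $\div(k)=\Pi E$, with $\Pi$ acting on divisors through $\mathbb{Z}[\langle\zeta\rangle]$; it holds up to a convention-dependent global sign. One way to prove it:
\begin{enumerate}
\item Write $p=\psi\Pi$ with $\psi=\prod_{j=2}^{p-1}(1-\zeta^j)$. Use $e_\Pi(\psi P',R_i)=e_p(P',R_i)$ for $P'\in J[p]$, together with the classical formula $e_p(A,B)=f_A(B)/f_B(A)$.
\item In the group ring, $\Pi\psi=p-N$, where $N=\sum_j\zeta^j$ acts on divisors as $x^*x_*$. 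So for $E=\psi P'$ as a divisor you may take $k=f_{P'}/(h\circ x)$, with $\div f_{P'}=pP'$ and $\div h=x_*P'$ on $\mathbb{P}^1$.
\item Weil reciprocity on $\mathbb{P}^1$ gives $h(\alpha_i)/h(\infty)=(x-\alpha_i)(P')$, that is, $k(R_i)/k(\infty)=e_p(P',R_i)$.
\item Apply this to $k=g/{}^\sigma g$, whose divisor is $\Pi({}^\sigma D'-D')$. This identifies $e(\kappa_{F,\Pi}([D]))$ with the Kummer cocycle of $\bigl(g(R_i)/g(\infty)\bigr)_i$, hence with $\delta_F([D])^{\pm1}$.
\end{enumerate}

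Finally, your fallback of calibrating on the basis $[R_j-\infty]$ and extending ``by linearity'' cannot replace this argument. $J[\Pi]$ does not generate $J(F)/\Pi J(F)$, so agreement there proves nothing about general classes. It can at most fix the sign once the formula is established.
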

	
	\begin{proof}
		Taking $\phi:A\ra J$ to be $\Pi:J\ra J$ and applying  \cite[Theorem 2.3]{RN31}, it follows directly that $\delta_F=\kappa_{F,\Pi}$. 
	\end{proof}

	\begin{lem}\label{dimTate}
		For a local field $F$, we have
		\[\dim_{\BF_p} \Im(\kappa_{F,\Pi})=\frac{\dim_{\BF_p}\RH^1(F,J[\Pi])}{2}.\]
	\end{lem}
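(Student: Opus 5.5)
We will deduce the equality from local Tate duality, in the standard way that the Kummer image of an isogeny is maximal isotropic. Write $\phi=\Pi$ and $\hat\phi=\ov{\Pi}=-\Pi$ for its dual isogeny. Since $\Pi$ and $-\Pi$ differ by the automorphism $-1$, they have the same kernel $J[\Pi]=J[\ov{\Pi}]$ and the same image $\Pi J(F)$.

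The first step is to use the Weil pairing $e_\Pi\colon J[\Pi]\times J[\ov{\Pi}]\to\mu_p$ recalled above. Combined with the cup product and the invariant map $\RH^2(F,\mu_p)\cong\BF_p$, it gives a perfect pairing
\[
\RH^1(F,J[\Pi])\times\RH^1(F,J[\ov{\Pi}])\lra \BF_p,
\]
by local Tate duality; this uses that $F$ is a local field of characteristic $0$.

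The second step is the standard fact, going back to Tate and recorded for instance in Schaefer--Stoll or in Poonen--Rains for general isogenies, that under this pairing the exact annihilator of $\Im(\kappa_{F,\Pi})$ is $\Im(\kappa_{F,\ov{\Pi}})$. To prove it, compare the two Kummer sequences
\[
0\to J(F)/\Pi J(F)\to \RH^1(F,J[\Pi])\to \RH^1(F,J)[\Pi]\to 0
\]
and its analogue for $\ov{\Pi}$. Tate local duality for abelian varieties, $\RH^1(F,J)\cong J^\vee(F)^\vee$, identifies $\RH^1(F,J)[\Pi]$ with the Pontryagin dual of $J^\vee(F)/\ov{\Pi}^\vee J^\vee(F)$. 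Via the principal polarization, $J^\vee\cong J$ and $\Pi^\vee$ corresponds to $\ov{\Pi}$ (the Rosati involution on $\BZ[\zeta_p]$ is complex conjugation). The compatibility of the cup product pairing with the Tate pairing then shows that the images are mutual annihilators. This step is the main obstacle, since it carries the compatibility check between the Weil pairing $e_\Pi$, the polarization, and Tate's duality; we will cite it rather than reprove it.

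The third step is to conclude. Because $J[\Pi]=J[\ov{\Pi}]$ and $\Pi J(F)=\ov{\Pi}J(F)$, we have $\kappa_{F,\ov{\Pi}}=\kappa_{F,\Pi}$ under the identification $\RH^1(F,J[\Pi])=\RH^1(F,J[\ov{\Pi}])$, up to the automorphism $-1$, which does not change images. Hence $V:=\Im(\kappa_{F,\Pi})$ equals its own annihilator under a perfect pairing on the finite-dimensional $\BF_p$-space $\RH^1(F,J[\Pi])$. Perfectness gives $\dim V+\dim V^\perp=\dim\RH^1(F,J[\Pi])$, and $V=V^\perp$ then yields $\dim V=\tfrac12\dim\RH^1(F,J[\Pi])$.

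As a cross-check one can compare with the classical formula $\#J(F)/\Pi J(F)=\#J(F)[\Pi]\cdot|\deg\Pi|_F^{-1}$ and the local Euler characteristic formula, where $\deg\Pi=p^{d-1}$. This gives $\dim\RH^1=2\dim J(F)[\Pi]+(d-1)[F:\BQ_p]$ when $F/\BQ_p$, and $2\dim J(F)[\Pi]$ otherwise, using self-duality $J[\Pi]^\vee(1)\cong J[\Pi]$. This route gives the same equality and could serve as the proof if the duality citation is unavailable.
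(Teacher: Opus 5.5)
Your main argument (Steps 1--3) is the paper's proof. The Weil pairing $e_\Pi$ and local Tate duality give a perfect pairing $\RH^1(F,J[\Pi])\times\RH^1(F,J[\ov{\Pi}])\to\BQ/\BZ$, the Kummer images of $\Pi$ and $\ov{\Pi}$ are exact annihilators by the cited duality theorem, and since these images coincide, $\Im(\kappa_{F,\Pi})$ is its own annihilator and so has half the dimension. Like the paper, you write $\ov{\Pi}=-\Pi$, but in fact $\ov{\Pi}=1-\zeta_p^{-1}=-\zeta_p^{-1}\Pi$. This is harmless: your argument only uses $\ov{\Pi}=u\Pi$ with $u$ an $F$-automorphism of $J$ commuting with $\Pi$. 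Then $\kappa_{F,\ov{\Pi}}(P)=\kappa_{F,\Pi}(u^{-1}P)$, and the images agree.

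The cross-check you offer as a fallback proof is wrong as stated.

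\emph{The index formula.} For $F/\BQ_p$ finite, compare kernel and cokernel of $\Pi$ on $J(F)$ and on a $\Pi$-stable open subgroup $U\cong\CO_F^g$. Such a $U$ comes from the formal group, with $\Pi$ acting linearly via the logarithm. This gives $\#J(F)/\Pi J(F)=\#J(F)[\Pi]\cdot|\det(d\Pi)|_F^{-1}$, where $d\Pi$ is the action on $\mathrm{Lie}(J)$. Compare $[n]$ on an elliptic curve: there the factor is $|n|_F^{-1}$, not $|n^2|_F^{-1}=|\deg[n]|_F^{-1}$.

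\emph{Evaluating the factor.} Here $\det(d\Pi)$ is a product of $g$ elements $1-\zeta_p^{j}$ with $p\nmid j$. Hence $|\det(d\Pi)|_F^{-1}=p^{(d-1)[F:\BQ_p]/2}=|\deg\Pi|_F^{-1/2}$.

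\emph{Why your version fails.} With your factor $|\deg\Pi|_F^{-1}$ you would get $\dim\Im(\kappa_{F,\Pi})=\dim J(F)[\Pi]+(d-1)[F:\BQ_p]$. This exceeds half of your (correct) Euler-characteristic count $2\dim J(F)[\Pi]+(d-1)[F:\BQ_p]$. So, as written, the cross-check contradicts the lemma rather than confirming it.

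With the corrected factor the two counts agree, and they also match $r_v-1+gd_v$ in Proposition~\ref{dimension}. The route then does give an independent proof that avoids the orthogonality of Kummer images. The cost is the formal-group argument for the index formula, plus the self-duality $J[\Pi]^\vee(1)\cong J[\Pi]$.
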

	\begin{proof}
		The Weil pairing $e_\Pi$ induces the local Tate duality
		\begin{equation}
			\label{pairing2}\RH^1(F,J[\Pi])\times \RH^1(F,J[\ov{\Pi}])\lra \BQ/\BZ.
		\end{equation}
		It is well-known from the local Tate dualities for finite groups and abelian varieties \cite{MilneADT} that the Kummer images $\Im(\kappa_{F,\Pi})$  and $\Im(\kappa_{F,\ov{\Pi}})$ are orthogonal complements of each other under the pairing (\ref{pairing2}).  The proposition follows by noting $\kappa_{F,\ov{\Pi}}=-\kappa_{F,\Pi}$.
	\end{proof}
\section{Local Kummer images}
\label{sec:local-kummer-images}
Let \(K\) be a number field containing a primitive \(p\)-th root of unity \(\zeta_p\). Let \(\Sigma\) denote the set of places of \(K\), and for each \(v \in \Sigma\), let \(K_v\) denote the corresponding local field. If \(v\) is finite, let \(\CO_v\) be the ring of integers of \(K_v\).

Since $K$ is totally imaginary, at an archimedean place, the relevant local cohomology group is trivial. We therefore restrict our attention to finite places.

\subsection{Local Kummer images}
\label{subsec:local-kummer-images}
Let $v$ be a finite place of \(K\),  let 
	\[f_0=f_{v,1}\cdots f_{v,r_v}\]
	denote the factorization of $f_0$ into product of irreducible polynomials over $K_v$ Accordingly, put
\[
L_v:=L\otimes_K K_v
\simeq \prod_{i=1}^{r_v}L_{v,i},
\]
where $L_{v,i}=K_v[x]/(f_{v,i})$. Let \(\mathcal O_{L_v}\) denote the ring of inegers of
\(L_v\), and set
\[
d_v=
\begin{cases}
	[K_v:\mathbb Q_p(\zeta_p)],& v\mid p,\\
	0,& v\nmid p.
\end{cases}
\]
By Proposition~\ref{prop:cohom}(3) and Proposition~\ref{Weil-Kummer}, we identify the local Kummer map $\kappa_{K_v,\Pi}$ with $\delta_{K_v}$, and henceforth denote it simply by:
\[
\delta_v\colon
J(K_v)/\Pi J(K_v)
\longrightarrow
\HKv.
\]
	\begin{defn}\label{dagger}
	For a finite place $v\in \Sigma$, we say that \(f_0\) satisfies condition \((\dagger)_v\) at \(v\) if the natural
	homomorphism
	\[
	\mathcal O_v[x]/(f_0(x))
	\longrightarrow
	\prod_{i=1}^{r_v}\mathcal O_v[x]/(f_{v,i}(x))
	\]
	is an isomorphism.
\end{defn}
\begin{prop}\label{root condition}
	For a finite place $v\in \Sigma$, the following conditions are equivalent:
	\begin{enumerate}
		\item The polynomial \(f_0\) satisfies \((\dagger)_v\).
		\item For maximal ideal $\mathfrak p_v\subset \mathcal O_{\overline {K_v}}$, every pair \(i\neq j\), every root \(\alpha\) of \(f_{v,i}\), and
		every root \(\beta\) of \(f_{v,j}\), one has $ \fp_v\nmid \alpha-\beta $
	\end{enumerate}
\end{prop}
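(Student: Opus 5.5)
The plan is to reduce $(\dagger)_v$ to a Chinese Remainder Theorem statement in $\CO_v[x]$, and then reduce comaximality of the factors to coprimality of their reductions modulo $\fp_v$. Throughout, $f_0$ is monic with coefficients in $\CO_v$, and so are its monic factors $f_{v,i}$ over $K_v$. Indeed, the roots of $f_0$ are integral over $\CO_v$, so the coefficients of each $f_{v,i}$, being symmetric functions of some of these roots, are integral elements of $K_v$ and hence lie in $\CO_v$.

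First, I will show that the natural map
\[
\CO_v[x]/(f_0)\to\prod_{i=1}^{r_v}\CO_v[x]/(f_{v,i})
\]
is always injective. Suppose $g\in\CO_v[x]$ is divisible by every $f_{v,i}$ in $\CO_v[x]$. Since the $f_{v,i}$ are pairwise coprime irreducibles in $K_v[x]$, $f_0$ divides $g$ in $K_v[x]$. Because $f_0$ is monic, division with remainder by $f_0$ can be carried out inside $\CO_v[x]$, so $f_0$ divides $g$ in $\CO_v[x]$. Hence $(\dagger)_v$ is equivalent to surjectivity.

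Second, I will show that surjectivity holds if and only if the ideals $(f_{v,i})$ and $(f_{v,j})$ are comaximal in $\CO_v[x]$ for all $i\neq j$. If they are pairwise comaximal, then $\bigcap_i (f_{v,i})=(f_0)$ by the first step, and the Chinese Remainder Theorem gives the isomorphism. Conversely, suppose the map is surjective. Take the preimage $e$ of the idempotent $(0,\dots,1,\dots,0)$ that is $1$ in the $i$-th slot. Then $1-e\in(f_{v,i})$ and $e\in(f_{v,j})$ for each $j\neq i$, so $1\in(f_{v,i})+(f_{v,j})$.

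Third, I will translate comaximality into the root condition. Any maximal ideal $\mathfrak m$ of $\CO_v[x]$ containing the monic polynomial $f_{v,i}$ contains $\fp_v$. This is because $\CO_v[x]/(f_{v,i})$ is finite, hence integral, over $\CO_v$, so maximal ideals of it contract to the maximal ideal of $\CO_v$. Thus $(f_{v,i})+(f_{v,j})\neq\CO_v[x]$ if and only if the reductions $\bar f_{v,i},\bar f_{v,j}$ generate a proper ideal of $k_v[x]$, where $k_v$ is the residue field. That happens if and only if they have a common root in $\overline{k_v}$. Since $f_{v,i}=\prod(x-\alpha)$ over $\CO_{\overline{K_v}}$, we get $\bar f_{v,i}=\prod(x-\bar\alpha)$. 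So a common root exists exactly when some root $\alpha$ of $f_{v,i}$ and some root $\beta$ of $f_{v,j}$ satisfy $\alpha\equiv\beta\pmod{\fp_v}$, that is, $\alpha-\beta\in\fp_v$, which is exactly the failure of condition (2).

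None of these steps seems hard. The only points needing care are integrality (the roots lying in $\CO_{\overline{K_v}}$, so that reduction commutes with the factorization) and the converse direction of the Chinese Remainder Theorem via idempotents, both handled above.
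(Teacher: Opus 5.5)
Your proof is correct, but it takes a different route from the paper's. The paper first reduces, by an induction it only sketches, to two factors $f_0=f_1f_2$, and then works with explicit polynomial identities. For (1)$\Rightarrow$(2) it takes $b\in\CO_v[x]$ with $bf_2\equiv 1 \pmod{f_1}$ and evaluates at a root $\alpha$ of $f_1$ to get $b(\alpha)f_2(\alpha)=1$. This contradicts $f_2(\alpha)\equiv f_2(\beta)=0 \pmod{\fp_v}$. For (2)$\Rightarrow$(1) it notes that $\operatorname{Res}(f_1,f_2)=\prod(\alpha-\beta)$ is a unit of $\CO_v$, and divides the identity $f_1u+f_2v=\operatorname{Res}(f_1,f_2)$ by it.

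You instead handle all $r_v$ factors at once. The idempotent form of the Chinese Remainder Theorem identifies $(\dagger)_v$ with pairwise comaximality in $\CO_v[x]$. The fact that every maximal ideal containing a monic polynomial lies over the maximal ideal of $\CO_v$ then turns comaximality into coprimality of the reductions $\bar f_{v,i}$ in $k_v[x]$. You read this off from the factorization over $\CO_{\overline{K_v}}$.

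The paper's route is shorter and fully explicit, since it produces the Bezout coefficients directly. Its costs are that it relies on the resultant identity and leaves the two-factor reduction to the reader; there the second factor is no longer irreducible, though the argument never uses irreducibility. Your route avoids both resultants and induction. It also isolates the residue-field criterion ``the $\bar f_{v,i}$ are pairwise coprime in $k_v[x]$.'' Finally, it justifies two points the paper takes for granted: that the $f_{v,i}$ have coefficients in $\CO_v$, and that injectivity over $\CO_v$ (not merely over $K_v$) follows from division with remainder by the monic $f_0$.
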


\begin{proof}
	By induction, it suffices to treat the case of two factors. Suppose
	$f_0 = f_1 f_2$. Consider
	\[
	\phi : \mathcal O_v[x]/(f_0) \longrightarrow \mathcal O_v[x]/(f_1) \times \mathcal O_v[x]/(f_2).
	\]
	Since $f_1$ and $f_2$ are coprime in $K_v[x]$, the map $\phi$ is injective. Hence the isomorphism condition reduces to proving that $\phi$ is surjective.
	
	Assume first that $\phi$ is surjective. Then $(1,0)$ lies in its image, so there exists $b(x) \in \mathcal O_v[x]$ such that
	\[
	b(x) f_2(x) \equiv 1 \pmod{f_1}.
	\]
	If $\alpha$ is a root of $f_1$, then $b(\alpha) f_2(\alpha) = 1$. Now let $\beta$ be a root of $f_2$. If $\alpha - \beta \in \mathfrak p_v \mathcal O_{\overline{K_v}}$ for some maximal ideal $\mathfrak p_v \subset \mathcal O_{\overline{K_v}}$, then $\alpha \equiv \beta \pmod{\mathfrak p_v}$, whence $f_2(\alpha) \equiv f_2(\beta) = 0 \pmod{\mathfrak p_v}$. Thus $b(\alpha) f_2(\alpha) \in \mathfrak p_v$, contradicting $b(\alpha) f_2(\alpha) = 1$. This proves $(1) \Rightarrow (2)$.
	
	Conversely, assume (2). Then every $\alpha - \beta$ is a unit in $\mathcal O_{\overline{K_v}}$, so
	\[
	\operatorname{Res}(f_1, f_2) = \prod_{\alpha, \beta} (\alpha - \beta) \in \mathcal O_v^\times.
	\]
	Hence there exist $u(x), v(x) \in \mathcal O_v[x]$ such that
	\[
	f_1 u + f_2 v = \operatorname{Res}(f_1, f_2).
	\]
	Dividing by $\operatorname{Res}(f_1, f_2)$ yields
	\[
	f_1 u_1 + f_2 v_1 = 1
	\]
	with $u_1, v_1 \in \mathcal O_v[x]$. Thus $(1,0)$ and $(0,1)$ lie in the image of $\phi$, so $\phi$ is surjective.
\end{proof}

This subsection is devoted to the proof of the following.
\begin{thm}\label{localcomputation}
	Let \(v\) be a finite place of \(K\) and assume that Hypothesis~\ref{hyp:local} holds.
	
	\begin{enumerate}
		\item If \(v\nmid p\), then
		\[
		\Im(\delta_v)
		=
		\left(
		\mathcal O_{L_v}^{\times}/
		\mathcal O_{L_v}^{\times p}
		\right)_{\N\equiv 1}.
		\]
		
		\item If \(v\mid p\), then
		\[
		\left[\left(
		\mathcal O_{L_v}^{\times}/
		\mathcal O_{L_v}^{\times p}
		\right)_{\N\equiv 1}:
		\Im(\delta_v)\right]=p^{gd_v}
		\]
		
	\end{enumerate}
\end{thm}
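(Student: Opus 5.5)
The plan is to compare dimensions. We show $\Im(\delta_v)$ is contained in the unit part $U_v:=\HOv$, compute $\dim_{\BF_p}\Im(\delta_v)$ from Lemma~\ref{dimTate}, and compute $\dim_{\BF_p}U_v$ directly; the two statements then follow by subtraction.

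\textbf{Dimension of the image.} By Lemma~\ref{dimTate}, $\dim\Im(\delta_v)=\tfrac12\dim\HKv$. We compute $\dim\HKv$ from the exact sequence
\[
1\to \HKv \to L_v^\times/L_v^{\times p}\xrightarrow{\N} K_v^\times/K_v^{\times p}.
\]
The norm map on $L_v^\times/L_v^{\times p}$ is surjective onto $K_v^\times/K_v^{\times p}$. Indeed, $p\nmid d=\sum_i[L_{v,i}:K_v]$, so some $L_{v,i}$ has degree prime to $p$. On that factor, the composite of the inclusion $K_v\subset L_{v,i}$ with the norm is raising to the power $[L_{v,i}:K_v]$, which is bijective on $K_v^\times/K_v^{\times p}$. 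Recall that for a local field $M\ni\zeta_p$, $\dim M^\times/M^{\times p}$ equals $2$ if $v\nmid p$ and $2+[M:\BQ_p]$ if $v\mid p$. Summing over the $r_v$ factors and subtracting the contribution of $K_v$ gives
\[
\dim\HKv=
\begin{cases}
2r_v-2, & v\nmid p,\\
2r_v-2+(d-1)[K_v:\BQ_p], & v\mid p.
\end{cases}
\]
Hence $\dim\Im(\delta_v)$ is $r_v-1$, respectively $r_v-1+\tfrac{(d-1)(p-1)}{2}d_v=r_v-1+gd_v$, using $[K_v:\BQ_p]=(p-1)d_v$.

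\textbf{Dimension of the unit part.} Run the same norm sequence on units. For $v\nmid p$, $\dim \CO_{L_{v,i}}^\times/p=1$. For $v\mid p$, $\dim\CO_{L_{v,i}}^\times/p=1+[L_{v,i}:\BQ_p]$. Surjectivity of the norm onto $\CO_{K_v}^\times/\CO_{K_v}^{\times p}$ follows by the same degree-prime-to-$p$ argument. This gives $\dim U_v=r_v-1$ for $v\nmid p$, and $\dim U_v=r_v-1+(d-1)[K_v:\BQ_p]=r_v-1+2gd_v$ for $v\mid p$.

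\textbf{Containment $\Im(\delta_v)\subseteq U_v$ (the main obstacle).} This step uses Hypothesis~\ref{hyp:local}. By Proposition~\ref{X-Tmap}, a class in $J(K_v)$ is represented by a divisor $D$ built from Galois orbits of points $Q$ with $y(Q)\neq0$, together with the Weierstrass-point classes. We must show that each $L_{v,i}$-component of $\prod(x(Q)-T)$ has valuation divisible by $p$. Identify the $i$-th component with a root $\alpha$ of $f_{v,i}$, and split into cases.
\begin{itemize}
\item If $x(Q)$ is non-integral, then $v(x(Q)-\alpha)=v(x(Q))$ for every $\alpha$. The equation $y^p=f$ gives $p\,v(y)=d'\,v(x)$, and $p\nmid d'$, so $p\mid v(x(Q))$.
\item If $x(Q)$ is integral, condition $(\dagger)_v$ via Proposition~\ref{root condition} says $x(Q)$ can be $\fp$-adically close to roots of at most one factor $f_{v,i}$. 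The valuation of $f(x(Q))=\prod_i f_{v,i}(x(Q))^{m}$, which is a $p$-th power, is then concentrated in that one factor, with exponent $m_i$ prime to $p$. So $v(N_{L_{v,i}(x(Q))/\cdot}(x(Q)-T))$ is divisible by $p$. Since $L_{v,i}$ may be ramified, we must work through the norm $\N_{L_{v,i}/K_v}$ and the local field generated by $Q$; this bookkeeping is where care is needed.
\item For Weierstrass points, we use the explicit formula of Proposition~\ref{X-Tmap}. The values $\alpha_j-T$ for $j$ in other factors are units by $(\dagger)_v$. Within a factor, the class is adjusted by the power $u_i$, and one checks that it is a unit modulo $p$-th powers.
\end{itemize}
I expect any residual valuation issues to be absorbed because the image is an $\BF_p$-space inside $\HKv$.

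\textbf{Conclusion.} For $v\nmid p$, the containment together with equal dimensions $r_v-1$ gives equality. For $v\mid p$, the index is $p^{(r_v-1+2gd_v)-(r_v-1+gd_v)}=p^{gd_v}$.
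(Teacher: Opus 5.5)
Your overall plan is the paper's: prove $\Im(\delta_v)\subseteq\HOv$, then compare dimensions. Your dimension count is correct and reproduces Proposition~\ref{dimension}; you compute $\dim\HKv$ from the norm sequence and halve it, where the paper cites Schaefer, which is immaterial. The gap is in the containment step, which the paper takes from Proposition~\ref{thm:integrality} and which you attempt with valuations.

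\textbf{Where your valuation argument breaks.} Two of your cases are fine or unnecessary. The non-integral case works once phrased with norms: $p\mid v(\N_{K_v(Q)/K_v}x(Q))$, and the $L_{v,i}$-component has valuation $e_{v,i}$ times this. The Weierstrass case is not needed, since the first part of Proposition~\ref{X-Tmap} gives representatives avoiding the points with $y=0$; as written it is also unproved.

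The integral case is the problem. Let $\alpha$ be the root of $f_{v,i}$ corresponding to $T_{v,i}$, put $\delta_i=\prod_\sigma\bigl(x({}^\sigma Q)-\alpha\bigr)\in L_{v,i}$, and let $m_i'$ be the exponent of $f_{v,i}$ in $f$. Then $y^p=f(x)$ and $(\dagger)_v$ give
$m_i'\,v\bigl(\N_{K_v(Q)/K_v}f_{v,i}(x(Q))\bigr)=p\,v\bigl(\N_{K_v(Q)/K_v}y(Q)\bigr)$.
Moreover $\N_{K_v(Q)/K_v}f_{v,i}(x(Q))=\N_{L_{v,i}/K_v}(\delta_i)$. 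So all you obtain is
$p\mid[k_{v,i}:k_v]\,v_{L_{v,i}}(\delta_i)$.
The equation controls only the sum of $v(x(Q)-\alpha')$ over all roots $\alpha'$ of $f_{v,i}$, not the distance to a single root. Contrary to your worry, ramification is harmless here. The real obstruction is a residue degree divisible by $p$.

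\textbf{The containment fails in that case.} Take $v\nmid p$ and a uniformizer $\varpi$ of $K_v$. Let $\beta$ be a unit generating the unramified extension $L_{v,1}/K_v$ of degree $p$, and let $f_{v,1}$ be the minimal polynomial of $\varpi\beta$. Set $c=\N(\beta)^{-1}$ and $f=f_0=f_{v,1}(x)(x-c)$.
\begin{itemize}
\item[(a)] $d=d'=p+1$, so $p\nmid dd'$.
\item[(b)] $f_{v,1}(c)\equiv c^p$ is a unit, so $(\dagger)_v$ holds.
\item[(c)] $f(0)=\varpi^p$, so $Q=(0,\varpi)\in C(K_v)$.
\end{itemize}
Then $\delta_v(Q-\infty)=(-\varpi\beta,-c)$, and its first component has valuation $1$. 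Hence $\Im(\delta_v)\not\subseteq\HOv$ under the hypotheses available at $v$. No amount of bookkeeping repairs this, and your closing remark that residual issues are absorbed because the image is an $\BF_p$-space is not an argument.

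\textbf{Consequence for the paper's argument.} The same example undercuts the paper's inference of $J_D^p=(q(T_v))$ from an equality of norms. Here $I_D=(T_v,\varpi)$ restricts, on the first factor, to the maximal ideal of the non-maximal order $\CO_v[\varpi\beta]$, which is not invertible.

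So an extra local hypothesis is needed. For instance, your argument, with the norm bookkeeping above, is complete whenever every residue degree $[k_{v,i}:k_v]$ is prime to $p$.
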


\begin{proof}
    This theorem follows immediately from Proposition~\ref{thm:integrality} and
    Proposition~\ref{dimension}.
\end{proof}

First we prove the local Kummer image is contained in the unit group, and we record the following lemma.

\begin{lem}\label{lemma:negvaluation}
	Let $v:\overline{K_v}\ra\BQ$ be the valuation normalized so that $v(K_v^\times)=\BZ$. Let $P=(a,b)\in C(\overline{K_v})$ and suppose that $v(a)<0$.    
	Consider the divisor $D=\sum_{\sigma} (\sigma(P)-\infty)\in J(K_v)$
	where the sum is over the different conjugates of $P$. Then
	$\delta_v(D)=1\in \HO$.
\end{lem}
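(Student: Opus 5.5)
Write $D=\sum_{\sigma}(\sigma(P)-\infty)$, where $\sigma$ runs over the $m=[K_v(a,b):K_v]$ embeddings of $K_v(P)$ into $\overline{K_v}$. The divisor $D$ is defined over $K_v$. Since $y(P)=b\neq 0$ (a pole of $x$ cannot lie over a root of $f$), Proposition~\ref{X-Tmap} gives
\[
\delta_v(D)\equiv \prod_{\sigma}\bigl(\sigma(a)-T\bigr)\pmod{L_v^{\times p}}.
\]
The plan is to show that this product is a $p$-th power times something trivially a $p$-th power. We work componentwise in $L_v\simeq\prod_i L_{v,i}$, or equivalently after base change to $\overline{K_v}$ where $T\mapsto(\alpha_1,\dots,\alpha_d)$, but keep track of rationality over $L_{v,i}$.

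The first step is to factor
\[
\prod_\sigma(\sigma(a)-T)=\Bigl(\prod_\sigma\sigma(a)\Bigr)\cdot\prod_\sigma\Bigl(1-\frac{T}{\sigma(a)}\Bigr).
\]
The first factor is $N_{K_v(a)/K_v}(a)^{m/[K_v(a):K_v]}$, an element of $K_v^\times$. To control it we use the curve equation: $b^p=f(a)$ and $v(a)<0$, so $v(f(a))=d'v(a)$, because $f$ is monic. Taking norms from $K_v(P)$ gives
\[
N(b)^p=N(f(a))=N(a)^{d'}\cdot N\bigl(f(a)/a^{d'}\bigr).
\]
Here $f(a)/a^{d'}=\prod(1-\alpha_i/a)^{s_i}$.

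The second step handles the correction terms $u:=\prod_\sigma(1-T/\sigma(a))$ and $f(a)/a^{d'}$. Because $f$ is integral, the $\alpha_i$ are integral, and since $v(a)<0$ each factor $1-\alpha_i/\sigma(a)$ is a principal unit congruent to $1$ modulo $a^{-1}$. For $v\nmid p$, principal units are $p$-th powers (Hensel), so $u\in L_v^{\times p}$ and $N(f(a)/a^{d'})\in K_v^{\times p}$. For $v\mid p$ this argument fails. In that case I would instead use the first part of Proposition~\ref{X-Tmap} to move $D$ to a linearly equivalent divisor, or argue via the formal group at $\infty$: points with $v(x)<0$ lie in the residue disc of $\infty$, so $D$ lies in the kernel of reduction. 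There I would try to show that $u$ is a $p$-th power in each $L_{v,i}$, via a binomial series in $1/\sigma(a)$ once the valuation is large enough, and otherwise via the observation that $\delta_v$ is a homomorphism and the subgroup of such $D$ is $\Pi$-divisible up to a small part. This is the main obstacle.

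The third step is to conclude that $N(a)^{d'}\in K_v^{\times p}$ up to a unit $p$-th power. Since $p\nmid d'$, it follows that $N(a)\in K_v^{\times p}$. Then $\prod_\sigma\sigma(a)$, which is a power of $N(a)$ lying in $K_v$, is a $p$-th power in $K_v$ and hence in $L_v$. Combining the three steps gives $\delta_v(D)=1$ in $L_v^\times/L_v^{\times p}$, so in particular it is the trivial element of $\HOv$.

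Alternatively, one can finish the third step using the fact that $T$ is integral: the element $\prod(\sigma(a)-T)$ has valuation $v(N(a))$ in every component. One then combines this with the $\N\equiv1$ condition.
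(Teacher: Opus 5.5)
\paragraph{Review.} Your argument is correct for $v\nmid p$. There $u=\prod_\sigma(1-T/\sigma(a))$ and $\N(f(a)/a^{d'})$ are principal units, hence $p$-th powers, and $p\nmid d'$ then gives $\prod_\sigma\sigma(a)\in K_v^{\times p}$.

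The gap is at $v\mid p$, and none of the routes you sketch can close it. There the literal claim $\delta_v(D)=1$ is false in general, and the step that breaks is the claim that $u$ is a $p$-th power. The paper's own Section~\ref{subsec:natural-local-contribution} already shows this. For $w\neq 0$ the points $P_w$ satisfy $v(x(P_w))=-p\,v(\pi w)<0$, yet $\delta_v(P_w-\infty)=[1-\pi^pT_vw^p]$. By Proposition~\ref{part:U4cardinality} and Theorem~\ref{Wv}, these classes span a space of dimension $\dim_{\BF_p}V_v$, which is positive as soon as some $\operatorname{Tr}_{k_{v,i}/k_v}(\overline{T_{v,i}})\neq 0$.

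Here is a concrete instance. Take $p=3$, $K=\BQ(\zeta_3)$, $C:y^3=(x+1)(x+2)$ (which satisfies Hypothesis~\ref{hyp:local}), $\pi=1-\zeta_3$ and $x_0=\pi^{-3}$. Then $f(x_0)=\pi^{-6}(1+3\pi^3+2\pi^6)$ is a cube in $K_v$ by Lemma~\ref{lem:local-pth-power}, so $P=(x_0,y_0)\in C(K_v)$ with $v(x_0)=-3$. However, $\delta_v(P-\infty)=[1-\pi^3T]$ has components $1+\pi^3$ and $1+2\pi^3$, and neither is a cube. Indeed, every unit of $\CO_v$ is $\pm(1+\pi\gamma)$, and $(1+\pi\gamma)^3\equiv 1+\pi^3(\gamma^3-\gamma)\equiv 1\pmod{\pi^4}$ for all $\gamma\in\CO_v$.

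So the formal group near $\infty$ is exactly where nontrivial Kummer classes at $v\mid p$ come from; this is the subspace $W_v$. Your binomial series for $(1-T/\sigma(a))^{1/p}$ only converges when $-v(a)>v(p)\,p/(p-1)$. No $\Pi$-divisibility argument can dispose of the remaining points.

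\paragraph{What is true and suffices.} Proposition~\ref{thm:integrality} only needs ``otherwise the claim follows from Lemma~\ref{lemma:negvaluation}'' in the following integrality form: $\delta_v(D)$ is represented by a unit, i.e.\ $\delta_v(D)\in\HOv$. The paper offers only a pointer to the hyperelliptic case. The mechanism that survives at $v\mid p$ is a valuation count, which you essentially already have.

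In $M=K_v(P)$ one has $p\,w_M(b)=d'\,w_M(a)$, so $p\mid w_M(a)$ because $p\nmid d'$. Alternatively, as in your last paragraph, this follows from the norm condition together with $p\nmid d$. Hence $\prod_\sigma\sigma(a)=\N_{M/K_v}(a)=\varpi^{pk}\epsilon$ with $\epsilon\in\CO_v^\times$. Meanwhile $u$ is a unit in every $L_{v,i}$, since $T$ is integral and $v(\sigma(a))<0$. Therefore $\delta_v(D)=[\epsilon u]$ is a unit class.

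For $v\mid p$ you should prove this weaker statement rather than triviality. It needs no $p$-th power claims at all, neither for $u$ nor for $\N(f(a)/a^{d'})$.
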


\begin{proof}
	The argument is analogous to that of Barrera Salazar, Pacetti,
	and Tornar\'ia \cite[Lemma~3.5]{RN34}. 
\end{proof}
\begin{prop}\label{thm:integrality}
	Under the hypotheses of Theorem~\ref{localcomputation}, we have
	\[
	\Im(\delta_v)
	\subset
	\left(
	\mathcal O_{L_v}^{\times}/
	\mathcal O_{L_v}^{\times p}
	\right)_{\N\equiv 1}.
	\]
\end{prop}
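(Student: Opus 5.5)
Since $J(K_v)$ is generated by divisors of the form $\sum_\sigma({}^\sigma P-\infty)$, with $P$ ranging over points of $C(\overline{K_v})$ and the sum over the $K_v$-conjugates of $P$, it suffices to show that $\delta_v$ of each such generator lies in the unit part. By Proposition~\ref{X-Tmap} we may assume $y(P)\neq 0$ and $P\neq\infty$; the ramification points $(\alpha_i,0)$ are handled by the explicit second formula of Proposition~\ref{X-Tmap}. Let $a=x(P)$. If $v(a)<0$, Lemma~\ref{lemma:negvaluation} gives $\delta_v=1$. So the main case is $v(a)\ge 0$. Here $\delta_v(D)=\N_{K_v(a)\otimes L_v/L_v}(a-T)$, and we must show that in each component $L_{v,i}$ this element has valuation divisible by $p$.

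By condition $(\dagger)_v$ we have $\CO_v[x]/(f_0)\cong\prod_i\CO_v[x]/(f_{v,i})$. Fix a component $L_{v,i}$. Over $\overline{K_v}$, the valuation of its component of $\delta_v(D)$ is computed from the quantities $v(a'-\alpha)$, where $\alpha$ runs over the roots of $f_{v,i}$ and $a'$ over the conjugates of $a$. Using Proposition~\ref{root condition}, the integral point $a$ can be $\fp_v$-adically close to roots of at most one factor $f_{v,j}$, since roots of distinct factors are not congruent mod $\fp_v$. Hence for $j\ne i$ the component in $L_{v,j}$ is a unit whenever $a$ is close to a root of $f_{v,i}$. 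It follows that at most one component $L_{v,i}$ can have nonunit contribution.

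Next I use the curve equation $b^p=f(a)=\prod_j f_{v,j}(a)^{m_j}$, where $m_j$ is the multiplicity of the irreducible factor of $f_0$ over $K$ containing $f_{v,j}$. Taking norms down to $K_v$, all factors with $j\ne i$ are units, so $v\bigl(\N_{K_v(a)/K_v}f_{v,i}(a)\bigr)\cdot m_i\equiv 0\pmod p$. Since $1\le m_i\le p-1$, this gives $p\mid v(\N f_{v,i}(a))$. Now $\N_{L_{v,i}/K_v}$ of the $i$-th component of $\delta_v(D)$ equals $\pm\N_{K_v(a)/K_v}f_{v,i}(a)$. The main obstacle is to pass from divisibility of the valuation of this norm to divisibility of the valuation in $L_{v,i}$ itself, because $L_{v,i}/K_v$ may be ramified.

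For that step I would first try to show that the ideal generated by $a-T$ in $\CO_{K_v(a)}\otimes\CO_{L_{v,i}}$ is a power of the unique prime, and then argue that its valuation in $L_{v,i}$ equals the valuation of the norm divided by the residue degree $f(L_{v,i}/K_v)$. If $L_{v,i}$ is ramified, this division instead requires comparing $v_{L_{v,i}}(\N(a-T))$ with $e\cdot v(\N f_{v,i}(a))/[L_{v,i}:K_v]$. This comparison holds because $\N_{L_{v,i}/K_v}$ multiplies valuations by the residue degree and $L_{v,i}$ has a single prime, so $v_{L_{v,i}}(x)=v_{K_v}(\N x)/f$. Therefore $p\mid v_{L_{v,i}}$ as long as $p\nmid f$, or more simply as long as the divisibility was already established at the level of the full product. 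If a residue-degree issue with $p$ remains, I would fall back on the argument of Barrera Salazar--Pacetti--Tornar\'ia \cite{RN34}, perturbing $P$ $v$-adically within its class modulo $\Pi J(K_v)$ so that $a$ is not close to any root. Finally, the norm condition $\N\equiv 1$ is automatic from Proposition~\ref{prop:cohom}(3), so the image lies in $\left(\CO_{L_v}^\times/\CO_{L_v}^{\times p}\right)_{\N\equiv1}$.
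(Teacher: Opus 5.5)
Your route differs from the paper's. The paper follows Stoll: it forms the $\CO_v[T_v]$-ideal $I_D=(q(T_v),S(T_v))$ from an interpolating polynomial, compares norms, and uses the Gorenstein property to extract a principal $p$-th root of $(q(T_v))$. You instead argue directly with valuations. You use $(\dagger)_v$ to isolate the single factor $f_{v,i}$ near which $a$ lies, and the curve equation to get $p\mid v_{K_v}\bigl(\N_{K_v(P)/K_v}f_{v,i}(a)\bigr)$. Up to this point your argument is sound and more elementary than the paper's.

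\textbf{The gap} is the next step, and ramification has nothing to do with it. For every finite extension one has $v_{K_v}(\N_{L_{v,i}/K_v}z)=f(L_{v,i}/K_v)\,v_{L_{v,i}}(z)$. So what you have actually shown for the $i$-th component $z_i$ of $\delta_v(D)$ is only $p\mid f(L_{v,i}/K_v)\,v_{L_{v,i}}(z_i)$. This finishes the proof when $p\nmid f(L_{v,i}/K_v)$ for all $i$, and gives no information when $p\mid f(L_{v,i}/K_v)$. The clause ``as long as the divisibility was already established at the level of the full product'' is circular.

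\textbf{The remaining case cannot be repaired.} Perturbing $P$, or any argument using only the hypotheses at $v$, will not help, because the conclusion fails there. Take $p=3$, $K_v=\BQ_7\ni\zeta_3$, and $f=f_0=(x^3-686)(x-3)$, so $d=d'=4$.
\begin{enumerate}
\item Since $2$ is not a cube mod $7$, $x^3-2\cdot 7^3$ is irreducible over $\BQ_7$, and $L_{v,1}=\BQ_7(\sqrt[3]{2})$ is unramified of degree $3=p$.
\item The roots of $x^3-686$ are all $\equiv 0 \pmod 7$ while the root $3$ of $x-3$ is not, so $(\dagger)_v$ holds.
\item We have $f(0)=6\cdot 7^3$ and $6\equiv(-1)^3 \pmod 7$, so Hensel gives $6=\gamma^3$ with $\gamma\in\BZ_7$. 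Hence $P=(0,7\gamma)\in C(\BQ_7)$.
\item Then $\delta_v(P-\infty)=(-7\sqrt[3]{2},\,-3)$ has valuation $1$ in $L_{v,1}$, so this class is not in the image of $\CO_{L_v}^\times$.
\end{enumerate}
This happens even though the divisibility you derived holds here: $v_7(f_{v,1}(0))=v_7(686)=3$. In particular, no representative of this class modulo $\Pi J(K_v)$ can be built from points whose $x$-coordinates avoid the residue classes of the roots.

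The paper's argument does not cover this case either. Here $m=1$, $t=0$ and $I_D=(T_v,7)$. In the non-maximal order $\BZ_7[7\sqrt[3]{2}]$ one finds $(7\sqrt[3]{2},7)^3=343\,\BZ_7[\sqrt[3]{2}]\neq(7\sqrt[3]{2})$, although the norms agree. So its step $J_D^p=(q(T_v))$, inferred from an equality of norms, fails.

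What your argument does prove is the proposition under the extra assumption $p\nmid f(L_{v,i}/K_v)$ for every $i$, for instance when every local factor $f_{v,i}$ has degree prime to $p$. Some hypothesis of this kind is genuinely needed.
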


\begin{proof}
The argument follows closely the proof of Michael Stoll given in \cite[Proposition~8.5]{RN487}. We recall only the main steps. Let  
	\[
	D = \sum_{i=1}^m P_i - m\cdot \infty
	\]
	be a divisor of degree zero, which we may suppose satisfies the following hypotheses: $x(P_i)$ is not a root of $f_0(x)$(see \cite[Lemma 2.2]{RN8}); the degree of $D^+$, namely $m$, is at most $g=\frac{(d-1)(p-1)}{2}$ by Riemann--Roch; the values $x(P_i)$ are pairwise distinct; and each point $P_i$ has integral coordinates (otherwise the claim follows from Lemma~\ref{lemma:negvaluation}).
	
	To simplify notation, write $P_i = (a_i, b_i)$. Then
	\[
	\delta_v(D) = \prod_{i=1}^m (a_i - T_v) = (-1)^m q(T_v),
	\]
	where
	\[
	q(x) = (x-a_1)\cdots (x-a_m) \in \mathcal O_v[x].
	\]
	There exists a unique polynomial $R(x) \in K_v[x]$ of degree at most $m-1$ such that $R(a_i) = b_i$.  
	Choose a uniformizer $\varpi$ of $K_v$, and let $t \ge 0$ be the smallest integer such that
	\[
	S(x) := \varpi^t R(x) \in \mathcal O_v[x].
	\]
	Let $I_D \subset \mathcal O_{L_v}$ be the $\mathcal O_v[T_v]$-ideal generated by $q(T_v)$ and $S(T_v)$. From \cite[Proposition~8.5]{RN487}, we have
	\[
	N(I_D) \equiv \varpi^{mt} \prod_{i=1}^m b_i 
	\text{ and } 
	N(q(T_v)) = \prod_{i=1}^m b_i^{\,p}.
	\]
	hence
	\[
	N\bigl(I_D(\varpi^{mt/d})^{-1}\bigr)^p = N(q(T_v)),
	\]
	where $d = \deg(f_0)$. Set
	$
	J_D := I_D(\varpi^{mt/d})^{-1},
	$
	which is again an $\mathcal O_v[T_v]$-ideal. We have $J_D^p = (q(T_v)).$
	Consequently,
	\[
	\mathcal O_v[T_v] \subset \operatorname{End}(J_D) \subset \operatorname{End}(J_D^p)
	\subset \prod_{i=1}^{r_v} \mathcal O_v[T_{v,i}].
	\]
	The ring $\mathcal O_v[T_v]$ is generated over $\mathcal O_v$ by a single element, so it is Gorenstein of dimension one. In particular, an $\mathcal O_v[T_v]$-ideal is principal if and only if it is proper; hence $J_D$ is a principal $\mathcal O_v[T_v]$-ideal. Write
	$
	J_D = (\alpha)
	$
	for some $\alpha \in \mathcal O_{L_v}$.
	From the identity $J_D^p = (q(T_v))$, we obtain $q(T_v)$ is a $p$-th power up to multiplication by a unit. Recalling that
	$
	\delta_v(D) = \prod_{i=1}^m (a_i - T_v) = (-1)^m q(T_v),
	$
	we conclude that
	$
	\delta_v(D) \in \HOv.
	$
\end{proof}

We next compute the dimensions of the local Kummer image and the relevant norm-one groups.

\begin{prop}\label{dimension}
	For every finite place \(v\) of \(K\), one has
	\begin{enumerate}
		\item
		$\dim_{\mathbb F_p} \Im(\delta_v)
		=
		r_v-1+g d_v.
		$
		\item $\dim_{\mathbb F_p}
		\left(L_v^\times/L_v^{\times p}\right)_{\N\equiv 1}
		=
		2(r_v-1)+2g d_v.$ 
		
		\item $\dim_{\mathbb F_p}
		\left(
		\mathcal O_{L_v}^{\times}/\mathcal O_{L_v}^{\times p}
		\right)_{\N\equiv 1}
		=
		r_v-1+2g d_v.$
	\end{enumerate}	
\end{prop}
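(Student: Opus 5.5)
The plan is to prove (2) and (3) by direct computation with local fields, and then get (1) from (2) via Lemma~\ref{dimTate}.

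\emph{Step 1: the full norm-one group.} Since $L_v=\prod_{i=1}^{r_v}L_{v,i}$, we have $L_v^\times/L_v^{\times p}=\prod_i L_{v,i}^\times/L_{v,i}^{\times p}$. For a finite extension $E/\mathbb Q_\ell$ containing $\zeta_p$, the standard formula gives
\[
\dim_{\BF_p}E^\times/E^{\times p}=2+[E:\mathbb Q_p]\ \text{ if }\ell=p,\qquad 2 \ \text{ if }\ell\neq p.
\]
Summing over the factors, and using $\sum_i[L_{v,i}:K_v]=d$, gives
\[
\dim L_v^\times/L_v^{\times p}=2r_v+d[K_v:\mathbb Q_p]
\]
when $v\mid p$, and $2r_v$ when $v\nmid p$. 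The group $(L_v^\times/L_v^{\times p})_{\N\equiv1}$ is the kernel of the norm map to $K_v^\times/K_v^{\times p}$, whose target has dimension $2+[K_v:\mathbb Q_p]$ (respectively $2$).

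I will check that this norm map is surjective. Because $p\nmid d=\sum_i[L_{v,i}:K_v]$, some factor has degree prime to $p$. Composing the inclusion $K_v^\times\to L_{v,i}^\times$ with the norm gives multiplication by $[L_{v,i}:K_v]$, which is an automorphism of the $\BF_p$-space $K_v^\times/K_v^{\times p}$. So the norm from that factor alone is already surjective. Hence the kernel has dimension
\[
2r_v-2+(d-1)[K_v:\mathbb Q_p].
\]
Since $[K_v:\mathbb Q_p]=(p-1)d_v$ and $2g=(p-1)(d-1)$, this equals $2(r_v-1)+2gd_v$, which proves (2). When $v\nmid p$ the same count works with $d_v=0$.

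\emph{Step 2: the units.} The analogous formula for units is
\[
\dim\CO_E^\times/\CO_E^{\times p}=1+[E:\mathbb Q_p]
\]
when $\ell=p$, since the torsion $\mu_p\subset E$ contributes $1$; when $\ell\neq p$ the dimension is $1$. The norm $\CO_{L_v}^\times/\CO_{L_v}^{\times p}\to\CO_v^\times/\CO_v^{\times p}$ is again surjective by the same degree-prime-to-$p$ argument applied to a single factor. The only subtlety is that $\N\equiv1$ means the norm lies in $K_v^{\times p}$, and for a unit this is the same as lying in $\CO_v^{\times p}$. Hence the dimension is
\[
r_v+d[K_v:\mathbb Q_p]-1-[K_v:\mathbb Q_p]=r_v-1+2gd_v,
\]
which proves (3).

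\emph{Step 3: the Kummer image.} By Proposition~\ref{prop:cohom}(3), $\RH^1(K_v,J[\Pi])\cong(L_v^\times/L_v^{\times p})_{\N\equiv1}$. Combining Lemma~\ref{dimTate} with (2) then gives $\dim\Im(\delta_v)=r_v-1+gd_v$, which is (1).

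The main obstacle is making sure the surjectivity of the norm, and the identification of unit norms lying in $K_v^{\times p}$ with $\CO_v^{\times p}$, are handled carefully. Both rest on the hypothesis $p\nmid d$. For $v\nmid p$ one also needs the unit count to include the factor $\mu_p$, which is present because $\zeta_p\in K$.
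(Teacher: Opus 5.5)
Your proof is correct. For (3) it matches the paper's argument: the norm is surjective because $p\nmid d$, and the unit count $\dim_{\mathbb F_p}\mathcal O_E^\times/\mathcal O_E^{\times p}=1+[E:\mathbb Q_p]$ (or $1$ when $v\nmid p$) is what the paper quotes from Neukirch II.5.7.

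For (1) and (2) you run the deduction in the opposite direction. The paper takes (1) as an input from Schaefer's Corollary~3.6, an external local count of $J(K_v)/\Pi J(K_v)$. Such a count combines $\#J[\Pi](K_v)=p^{r_v-1}$ with a factor $p^{gd_v}$ measuring the $v$-adic size of $\Pi$ on the formal group of $J$. The paper then doubles it via Lemma~\ref{dimTate} to obtain (2). You instead compute (2) purely by field arithmetic, using $\dim_{\mathbb F_p}E^\times/E^{\times p}=2+[E:\mathbb Q_p]$ (or $2$ when $v\nmid p$) and the same norm-surjectivity argument, and then halve it with Lemma~\ref{dimTate}.

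Your route is more elementary and self-contained. It avoids any analysis of how $\Pi$ acts on the tangent space of $J$, and it makes (2) and (3) parallel computations. The trade-off is where local duality enters. In your proof, (1) depends entirely on the maximal-isotropy content of Lemma~\ref{dimTate}, whose proof in the paper is only sketched. This matters because (1) is the item actually used later, for the index $p^{gd_v}$ in Theorem~\ref{localcomputation}. In the paper, (1) is obtained independently of duality, and only (2) relies on it. Carrying out both computations side by side also gives a consistency check on Lemma~\ref{dimTate}.
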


\begin{proof}
	 (1) follows from \cite[Corollary~3.6]{RN31}, and Lemma~\ref{dimTate} then gives (2) immediately.	
	
	It remains to prove \textup{(3)}. Write
	$
	L_v=\prod_{i=1}^{r_v}L_{v,i}.
	$
	There exits a short exact sequence
	\[
	0
	\longrightarrow
	\left(
	\mathcal O_{L_v}^{\times}/
	\mathcal O_{L_v}^{\times p}
	\right)_{\N\equiv 1}
	\longrightarrow
	\mathcal O_{L_v}^{\times}/
	\mathcal O_{L_v}^{\times p}
	\xrightarrow{\;\N\;}
	\mathcal O_{K_v}^{\times}/
	\mathcal O_{K_v}^{\times p}
	\longrightarrow 0,
	\]
	where the norm map $\N$ is surjective because $p\nmid d$. Then assertion (4) follows by
	applying \cite[Chapter~II, Proposition~5.7]{Neukirch1999}.

\end{proof}

\subsection{A natural contribution to the local Kummer images}
\label{subsec:natural-local-contribution}
The following analysis is analogous to the results of Barrera Salazar et al. \cite{RN27,RN34}.
Throughout this subsection, let \(v\mid p\) and \(\pi\) be a uniformizer of
\(\BQ_p(\zeta_p)\). Let \(\widehat J\)
denote the formal group of \(J\) at the origin. Evaluating the power-series parametrisation produces explicit classes
in \(J(K_v)/\Pi J(K_v)\). Their images under \(\delta_v\) define a natural subspace of the local Kummer image.

	Recall that the superelliptic curve $C: y^p = f(x)$, where
	$
	f(x) = x^{d^\prime} + a_1 x^{d^\prime-1} + \cdots + a_{d^\prime}.
	$
	To analyze the local behavior near infinity, we introduce new variables
	\[
	z = \frac{y}{x^{\frac{d^\prime+1}{p}}}
	\quad \text{and} \quad
	w = \frac{1}{x},
	\]
	With respect to these coordinates, the origin $O$ on the Jacobian corresponds
	to the point $(z,w) = (0,0)$, and $z$ is a local uniformizer at $O$.
	
	Substituting into the defining equation of $C$, we obtain
	$
	w = z^p - a_1 w^2 - \cdots - a_{d^\prime} w^{d^\prime+1}.
	$
	Iterating this relation allows us to express $w$ as a power series in $z$:
	$
	w = z^p - a_1 z^{2p} + (2a_1^2 - a_2) z^{3p} + \cdots .
	$
	Consequently, we derive Laurent expansions
	\[
	x(z)=z^{-p}+z^{p-1}A(z),
	\qquad
	y(z)=z^{-d^\prime}+z^{2p-d^\prime-1}B(z),
	\]
	where
	$
	A(z),B(z)\in \mathcal O_v[[z]].
	$
	For every \(w\in\mathcal O_v\), the element \(z=\pi w\) belongs to the
	maximal ideal of \(\mathcal O_v\). Consequently, the above power series converge in
	\(\mathcal O_v\).
	Thus we obtain a point
	\[
	P_w=(x(\pi w),y(\pi w))\in C(K_v).
	\]
	We compute the descent image of \(P_w-\infty\):
	\[
	\begin{aligned}
		\delta_v(P_w-\infty)
		&=x(\pi w)-T_v
		=(\pi w)^{-p}
		\left(
		1-\pi^pT_vw^p+\pi^{2p-1}w^pA(\pi w)
		\right) \\
		&=(\pi w)^{-p}
		(1-\pi^pT_vw^p)
		\left(
		1+\frac{\pi^{2p-1}w^pA(\pi w)}
		{1-\pi^pT_vw^p}
		\right).
	\end{aligned}
	\]
	By the local $p$-th power criterion (cf. Lemma \ref{lem:local-pth-power}), 
	\[
	\delta_v(P_w-\infty)
	=
	\left[
	1-\pi^pT_vw^p
	\right].
	\]
	Consequently, the following definition provides an explicit subgroup of the local Kummer image $\Im(\delta_v)$. 
\begin{defn}
	Define 
	\[
	W_v
	=
	\left\{
	\left[
	1-\pi^pT_vw^p
	\right]
	|
	w\in \mathcal O_v
	\right\}\subset \HOv.
	\]
\end{defn}
In the following proposition we gives a linear description of the space $W_v$.

\begin{defn}Define

	\[
	\mathcal U_v
	=
	\left\{
	\left[1+\pi^p\beta\right]|
	\beta\in \mathcal O_{L_v}
	\right\}
	\subset \HOv.
	\] 

\end{defn}
It is clear from the definitions that
$
W_v \subseteq \mathcal U_v.
$ Writing
\[
\beta=(\beta_1,\ldots,\beta_{r_v}),
\qquad
\beta_i\in \mathcal O_{L_{v,i}},
\]
we define
\[
\Phi_v:
\mathcal U_v
\longrightarrow
\mathbb F_p^{r_v}
\]
by
\[
\Phi_v(1+\pi^p\beta)
=
\left(
\operatorname{Tr}_{k_{v,1}/\mathbb F_p}(\overline\beta_1),
\ldots,
\operatorname{Tr}_{k_{v,r_v}/\mathbb F_p}(\overline\beta_{r_v})
\right).
\]
Denote
\[
S_v
=
\left\{
(s_1,\ldots,s_{r_v})\in \mathbb F_p^{r_v}
:
\sum_{i=1}^{r_v} e_{v,i}s_i=0
\right\}.
\]

\begin{prop}\label{part:U4cardinality}
	The map \(\Phi_v\) induces an isomorphism
	\[
	\Phi_v:\mathcal U_v\xrightarrow{\sim} S_v.
	\]
	In particular,
	\[
	\dim_{\mathbb F_p}\mathcal U_v=r_v-1.
	\]
\end{prop}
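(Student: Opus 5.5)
The plan is to study the group of principal units $U^{(n)}$ of level $n$ in each factor $L_{v,i}$ and to use the standard structure of $p$-th powers there. Let $e=v_\pi$-normalization. Since $\pi=1-\zeta_p$ up to a unit, we have $v_{L_{v,i}}(\pi)=e_{v,i}$ and $v_{L_{v,i}}(p)=(p-1)e_{v,i}$. Hence $\pi^p\beta$ sits at level $pe_{v,i}=\frac{p}{p-1}v(p)$, which is exactly the critical level. The classical facts about local units are what we need. First, every element of $U^{(n)}$ with $n>\frac{p}{p-1}v(p)$ is a $p$-th power. Second, at the critical level, $1+\pi^p\beta$ is a $p$-th power (modulo higher units) if and only if the polynomial $X^p-X-\overline{c\beta}$ has a root in the residue field. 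Here $c$ is the unit with $(1+\pi X)^p\equiv 1+\pi^p(X^p-X)$ modulo higher terms. A normalization check shows $c\equiv 1$, because $p\equiv -\pi^{p-1}\cdot(\text{unit}\equiv 1)$.

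\textbf{Step 1: the trace is well defined and injective on each factor.} By the Artin--Schreier criterion just described, $[1+\pi^p\beta_i]$ is trivial in $\mathcal O_{L_{v,i}}^\times/\mathcal O_{L_{v,i}}^{\times p}$ exactly when $\overline\beta_i\in\wp(k_{v,i})$. The image $\wp(k_{v,i})$ is precisely the kernel of $\operatorname{Tr}_{k_{v,i}/\mathbb F_p}$. We also need multiplicativity: $(1+\pi^p\beta)(1+\pi^p\beta')\equiv 1+\pi^p(\beta+\beta')$ modulo a $p$-th power, since the product term lies above the critical level. Together these give that $\beta_i\mapsto[1+\pi^p\beta_i]$ induces an isomorphism $k_{v,i}/\wp(k_{v,i})\cong\mathbb F_p$, realized by the trace. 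So the map $\Phi_v$, extended to all classes $[1+\pi^p\beta]$ in $\prod_i\mathcal O_{L_{v,i}}^\times/(\cdot)^p$, is a well-defined injective homomorphism into $\mathbb F_p^{r_v}$ and surjective onto it.

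\textbf{Step 2: the norm condition cuts out $S_v$.} Next we compute $\N_{L_{v,i}/K_v}(1+\pi^p\beta_i)$ modulo $K_v^{\times p}$. We use that the norm is compatible with the level filtration at the critical level. Concretely, $\N(1+\pi^p\beta_i)\equiv 1+\pi^p\operatorname{Tr}_{L_{v,i}/K_v}(\beta_i)$ modulo higher-level units, by expanding the product over embeddings. Reducing mod $\mathfrak p$ gives $\overline{\operatorname{Tr}_{L_{v,i}/K_v}\beta_i}=e_{v,i}\operatorname{Tr}_{k_{v,i}/k_v}\overline\beta_i$. Applying Step 1 over $K_v$, the class in $\mathbb F_p$ of the norm is $e_{v,i}\operatorname{Tr}_{k_{v,i}/\mathbb F_p}(\overline\beta_i)$. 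Summing over $i$, the total norm is a $p$-th power exactly when $\sum_i e_{v,i}s_i=0$. Hence $\mathcal U_v$, which consists of the norm-one classes, maps isomorphically onto $S_v$.

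\textbf{Step 3: dimension.} Since $p\nmid d=\sum e_{v,i}f_{v,i}$, not all $e_{v,i}$ are divisible by $p$. So the defining equation of $S_v$ is a nonzero linear form, and $\dim S_v=r_v-1$.

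The main obstacle is Step 2. The step ``norm mod higher units equals $1+\pi^p\mathrm{Tr}$'' is delicate in wildly ramified factors: when $p\mid e_{v,i}$, cross terms in the product over conjugates might contribute at the critical level. I would first try to handle this by the filtration-compatibility of the norm map from local class field theory (Serre, Local Fields, V \S3), which describes $\N$ on $U^{(\psi(n))}/U^{(\psi(n)+1)}$. If that becomes messy, I would instead argue by counting. The map $\mathcal U_v\to\mathbb F_p^{r_v}$ is injective, so its image has dimension $r_v$ or $r_v-1$. The norm-one condition is nontrivial because some factor with $p\nmid e_{v,i}$ lets us realize a norm that is not a $p$-th power. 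Finally, $S_v$ contains the image, which we check on elements supported in pairs of factors.
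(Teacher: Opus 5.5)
Your proof is correct and follows the same route as the paper. You use the critical-level trace criterion on each factor (the paper's Lemma~\ref{Trace condition}) for well-definedness and injectivity, then the expansion $\N(1+\pi^p\beta_i)\equiv 1+\pi^p\operatorname{Tr}_{L_{v,i}/K_v}(\beta_i)$ together with $\operatorname{Tr}_{L_{v,i}/K_v}\equiv e_{v,i}\operatorname{Tr}_{k_{v,i}/k_v}$ to cut out $S_v$. Your Step 3, using $p\nmid d=\sum_i e_{v,i}f_{v,i}$, supplies the justification of $\dim S_v=r_v-1$ that the paper leaves implicit.

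The obstacle you flag is not real, so neither Serre's filtration argument nor the counting fallback is needed. Since $\pi\in K_v$ is fixed by every embedding, $\N(1+\pi^p\beta_i)=\sum_k \pi^{kp}c_k$, where the $c_k\in\mathcal O_{K_v}$ are the elementary symmetric functions of the conjugates of $\beta_i$. Hence all cross terms lie in $\pi^{2p}\mathcal O_{K_v}$, even when $p\mid e_{v,i}$.
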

Before giving the proof of the above proposition, we first present three lemmas as follows.

\begin{lem}\label{lem:local-pth-power}
	Let \(K_v\) be a field complete under a discrete valuation \(v\); suppose 
	that \(K_v\) has characteristic zero and its residue field has characteristic 
	\(p \neq 0\). Let \(e = v(p)\) be the absolute ramification index of \(K_v\). 
	Let  \(\mathcal{O}_v\) denote 
	its ring of integers. If \(t \in \mathcal{O}_v\) satisfies
	\[
	v(t) > \frac{ep}{p-1},
	\]
	then there exists \(\alpha \in \mathcal{O}_v^\times\) such that
	\[
	1 + t = \alpha^p.
	\]
\end{lem}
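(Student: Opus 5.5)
The plan is to solve $X^p = 1+t$ in $\mathcal O_v$ by Hensel's lemma in its strong form: if $g\in\mathcal O_v[X]$ and $a\in\mathcal O_v$ satisfy $v(g(a))>2v(g'(a))$, then $g$ has a root $\alpha\in\mathcal O_v$ with $v(\alpha-a)>v(g'(a))$. We take
\[
g(X)=X^p-(1+t),\qquad a=1 .
\]
Then $g(1)=-t$ and $g'(1)=p$, so $v(g'(1))=e$. The Hensel criterion becomes $v(t)>2e$.

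This is weaker than the hypothesis $v(t)>ep/(p-1)$ whenever $p>2$, since $ep/(p-1)<2e$ in that case. So the naive approach only covers part of the range, and the main obstacle is the region
\[
\frac{ep}{p-1}<v(t)\le 2e .
\]
To bridge it, I would use the binomial/logarithm series, which is more precise than Hensel. Formally,
\[
(1+t)^{1/p}=\sum_{n\ge0}\binom{1/p}{n}t^n .
\]
Each coefficient satisfies $v_p\bigl(\binom{1/p}{n}\bigr)=-n-v_p(n!)$. Since $v_p(n!)\le (n-1)/(p-1)$, we get, in $v$-valuation,
\[
v\Bigl(\binom{1/p}{n}t^n\Bigr)\ \ge\ n\Bigl(v(t)-\frac{ep}{p-1}\Bigr)+\frac{e}{p-1}.
\]
Under the hypothesis this tends to $\infty$, and every term is integral. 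The series therefore converges in $\mathcal O_v$ to some $\alpha$.

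The identity $\alpha^p=1+t$ then follows from the formal power-series identity $\bigl(\sum\binom{1/p}{n}T^n\bigr)^p=1+T$ in $\mathbb Q[[T]]$. Specializing at $T=t$ is legitimate by absolute convergence: one can rearrange the $p$-fold Cauchy product, because the terms of total degree $n$ have valuation growing linearly in $n$ by the estimate above. Finally, $\alpha\equiv 1$ modulo the maximal ideal, so $\alpha\in\mathcal O_v^\times$.

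An alternative I would keep in reserve is to avoid series entirely with the $\exp/\log$ isomorphism $1+\mathfrak p^n\cong\mathfrak p^n$ for $n>e/(p-1)$. Writing $1+t=\exp(\log(1+t))$, the condition $v(t)>ep/(p-1)$ gives $v(\log(1+t)/p)=v(t)-e>e/(p-1)$. Hence $\alpha:=\exp\bigl(\log(1+t)/p\bigr)$ is defined, lies in $1+\mathfrak p$, and satisfies $\alpha^p=1+t$. I expect this route to be cleaner, since it only needs the standard convergence radii $v(x)>e/(p-1)$ of $\exp$ and the fact that $\log$ is an isometry on $1+\mathfrak p^n$ for such $n$.
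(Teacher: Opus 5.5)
Your proof is correct, but it takes a different route from the paper. The paper proves the lemma only by citing Serre's result that for $n>e/(p-1)$ the map $x\mapsto x^p$ is an isomorphism $U_n\to U_{n+e}$ of higher unit groups, applied with $n=v(t)-e$; the hypothesis $v(t)>ep/(p-1)$ is exactly $v(t)-e>e/(p-1)$.

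You instead write the root explicitly as $\alpha=\sum_{n\ge0}\binom{1/p}{n}t^n$. Your estimate $v\bigl(\binom{1/p}{n}t^n\bigr)\ge n\bigl(v(t)-\tfrac{ep}{p-1}\bigr)+\tfrac{e}{p-1}$ for $n\ge1$ is right; it uses $v|_{\mathbb Q}=e\,v_p$ and $v_p(n!)\le (n-1)/(p-1)$. Specializing $\bigl(\sum_n\binom{1/p}{n}T^n\bigr)^p=1+T$ at $T=t$ is also legitimate. In a complete non-archimedean field, a family whose terms tend to $0$ is unconditionally summable. Here the terms of total degree $n$ in the $p$-fold product have valuation at least $n\bigl(v(t)-\tfrac{ep}{p-1}\bigr)$.

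You correctly see why plain Hensel, which needs $v(t)>2e$, falls short for $p>2$. The usual proof of the cited fact gets past this with a graded argument rather than series. If $v(y)=n>e/(p-1)$, then $(1+y)^p\equiv 1+py\pmod{\mathfrak p^{n+e+1}}$. So the $p$-th power map induces isomorphisms $U_n/U_{n+1}\to U_{n+e}/U_{n+e+1}$, and completeness upgrades this to $U_n\cong U_{n+e}$. Your $\exp/\log$ fallback is the other standard proof of that same fact.

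What each route buys: the citation gives brevity and the sharper bijectivity statement, so the root is unique in $U_{v(t)-e}$. Your series gives a self-contained argument with an explicit formula. Your own estimate also shows $\alpha\equiv 1+t/p\pmod{\mathfrak p^{v(t)-e+1}}$, i.e.\ $\alpha\in U_{v(t)-e}$, matching Serre's statement.
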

\begin{proof}
	This result is a direct consequence of Serre's description of the p-th power map on higher unit groups. (cf. \cite[Chapter XIV, Section 4, Proposition 9]{SerreLocalFields}). 
\end{proof}

\begin{lem}\label{Trace condition}
	Let $v$ be a place of $K$ lying above $p$, and let $K_v$ be the corresponding local field. 
	Denote by $\mathcal{O}_v$ the ring of integers of $K_v$, and by $k_v$ the residue field of $K_v$. 
	Let $\pi$ be a uniformizer of $\mathbb{Q}_p(\zeta_p)$. 
	For $\alpha \in \mathcal{O}_v$, let $\overline{\alpha}$ denote its image in $k_v$. 
	Then
	\[
	1 + \pi^p \alpha \in \mathcal{O}_v^{\times p}
	\iff
	\operatorname{Tr}_{k_v/\mathbb{F}_p}(\overline{\alpha}) = 0.
	\]
\end{lem}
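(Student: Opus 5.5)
The plan is to reduce to an Artin--Schreier equation over the residue field \(k_v\) by analysing \(p\)-th powers of units of the form \(1+\pi y\). Write \(e=v(p)\), so that \(v(\pi)=e/(p-1)\) and \(v(\pi^p)=ep/(p-1)\). This is exactly the borderline case not covered by Lemma~\ref{lem:local-pth-power}.

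\emph{Step 1: normalise \(\pi\).} First take \(\pi=1-\zeta_p\). From \(p=\prod_{i=1}^{p-1}(1-\zeta_p^i)=\pi^{p-1}\prod_{i=1}^{p-1}(1+\zeta_p+\cdots+\zeta_p^{i-1})\) and \(\prod i=(p-1)!\equiv -1 \pmod{\pi}\), we get
\[
p=-\pi^{p-1}\varepsilon, \qquad \varepsilon\equiv 1 \pmod{\pi}.
\]
A general uniformizer of \(\BQ_p(\zeta_p)\) has the form \(\pi'=\pi u\) with \(u\in\BZ_p[\zeta_p]^\times\). Its residue \(\bar u\) lies in \(\BF_p^\times\) and satisfies \(\bar u^p=\bar u\). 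Then \(\pi'^p\alpha=\pi^p(u^p\alpha)\) and \(\operatorname{Tr}(\bar u^p\bar\alpha)=\bar u\operatorname{Tr}(\bar\alpha)\), so the statement for \(\pi'\) is equivalent to the statement for \(\pi\). We may therefore assume \(\pi=1-\zeta_p\).

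\emph{Step 2: the key congruence.} For \(y\in\mathcal O_v\), expand
\[
(1+\pi y)^p=1+p\pi y+\sum_{k=2}^{p-1}\binom pk\pi^ky^k+\pi^py^p .
\]
For \(2\le k\le p-1\), the middle terms have valuation at least \(e+ke/(p-1)>ep/(p-1)\). Using Step 1, \(p\pi y=-\pi^p\varepsilon y\equiv-\pi^p y\) modulo \(\pi^{p}\mathfrak p_v\). Hence
\[
(1+\pi y)^p\equiv 1+\pi^p(y^p-y)\pmod{\pi^p\mathfrak p_v}.
\]

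\emph{Step 3 (\(\Rightarrow\)).} Suppose \(1+\pi^p\alpha=x^p\) with \(x\in\mathcal O_v^\times\). Modulo \(\mathfrak p_v\) this gives \(\bar x^p=1\), so \(\bar x=1\). Let \(s=v(x-1)>0\). If \(s<e/(p-1)\), the same binomial estimate gives \(v(x^p-1)=ps<ep/(p-1)\le v(\pi^p\alpha)\), a contradiction. Hence \(x=1+\pi y\) with \(y\in\mathcal O_v\). Step 2 then gives \(\alpha\equiv y^p-y \pmod{\mathfrak p_v}\), i.e.\ \(\bar\alpha=\bar y^p-\bar y\). Therefore \(\operatorname{Tr}_{k_v/\BF_p}(\bar\alpha)=0\), since Frobenius preserves the trace.

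\emph{Step 4 (\(\Leftarrow\)).} Suppose \(\operatorname{Tr}(\bar\alpha)=0\). By additive Hilbert 90 (Artin--Schreier theory for the finite field \(k_v\)), the kernel of the trace equals the image of \(z\mapsto z^p-z\). So choose \(y\in\mathcal O_v\) with \(\bar y^p-\bar y=\bar\alpha\). By Step 2,
\[
(1+\pi^p\alpha)(1+\pi y)^{-p}=1+t, \qquad v(t)>ep/(p-1).
\]
By Lemma~\ref{lem:local-pth-power}, \(1+t\) is a \(p\)-th power, and hence so is \(1+\pi^p\alpha\).

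The main obstacle is Step 1–2: the precise constant in \(p=-\pi^{p-1}\varepsilon\) and the resulting congruence. This is what makes the condition exactly \(\operatorname{Tr}(\bar\alpha)=0\) rather than a twisted trace condition. I would verify it via the explicit product formula for \(p\) above.
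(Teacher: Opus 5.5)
Your proposal is correct and follows essentially the same route as the paper. Both arguments expand $(1+\pi y)^p$ and use $p=-\pi^{p-1}\varepsilon$ with $\varepsilon\equiv 1 \pmod{\pi}$ (from the product formula and Wilson's theorem) to get $(1+\pi y)^p\equiv 1+\pi^p(y^p-y)$, then finish with Artin--Schreier/additive Hilbert~90 and Lemma~\ref{lem:local-pth-power} for the converse. Two of your steps fill gaps in the paper: your valuation argument showing that a $p$-th root of $1+\pi^p\alpha$ lies in $1+\pi\mathcal O_v$, which the paper merely attributes to Lemma~\ref{lem:local-pth-power} (that lemma does not give it), and your explicit reduction to $\pi=1-\zeta_p$, which the paper assumes tacitly.
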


\begin{proof}
	Suppose that
	\[
	1 + \pi^p \alpha = \beta^p
	\]
	for some $\beta \in \mathcal O_v^\times$. By Lemma~\ref{lem:local-pth-power}, we have $\beta \equiv 1 \pmod{\pi}$. Let $\mathfrak p$ denote the maximal ideal of $\mathcal O_v$. We claim that $1 + \pi^p \alpha$ is a $p$-th power in $\mathcal O_v^\times$ if and only if there exists $\gamma \in \mathcal O_v$ such that
	\[
	\alpha \equiv -\gamma + \gamma^p \pmod{\mathfrak p}.
	\]
	
	To see this, write $\beta = 1 + \pi \gamma$ with $\gamma \in \mathcal O_v$, and consider the expansion
	\[
	(1 + \pi \gamma)^p
	=
	1 + p \pi \gamma + \sum_{i=2}^{p-1} \binom{p}{i} (\pi \gamma)^i + \pi^p \gamma^p.
	\]
	Since $p = -u \pi^{p-1}$ for some unit $u \in \mathcal O_v^\times$, we may rewrite this as
	\[
	(1 + \pi \gamma)^p
	=
	1 + \pi^p
	\left(
	-u\gamma
	+ \sum_{i=2}^{p-1} \binom{p}{i} \frac{(\pi \gamma)^i}{\pi^p}
	+ \gamma^p
	\right).
	\]
	Using the identity $\prod_{i=1}^{p-1}(1-\zeta_p^i)=p$, we write
	\[
	u = -\prod_{i=1}^{p-1}(1+\zeta_p+\cdots+\zeta_p^{i-1}) \in \mathcal O_v^\times.
	\]
	Since $\zeta_p \equiv 1 \pmod{\pi}$, we have $u \equiv -(p-1)! \equiv 1 \pmod{\pi}$ by Wilson's theorem, which shows that
	\[
	\alpha \equiv -\gamma + \gamma^p \pmod{\mathfrak p}.
	\]
	In $k_v$, we have
	\[
	\overline{\alpha} = \overline{\gamma}^p - \overline{\gamma}.
	\]
	Thus
	\[
	\overline{\alpha} \in \Im(F-1),
	\]
	where
	\[
	F : k_v \to k_v, \qquad F(x) = x^p.
	\]
	The sufficiency is immediate by Lemma~\ref{lem:local-pth-power}.
	
	By additive Hilbert 90,
	\[
	H^1(\operatorname{Gal}(k/\mathbb F_p), k_v) = 0.
	\]
	We have the exact sequence
	\[
	0 \longrightarrow \mathbb F_p
	\longrightarrow k
	\xrightarrow{F-1}
	k
	\xrightarrow{\operatorname{Tr}_{k/\mathbb F_p}}
	\mathbb F_p
	\longrightarrow 0.
	\]
	Hence
	\[
	\Im(F-1) = \Ker \operatorname{Tr}_{k/\mathbb F_p}.
	\]
	Therefore
	\[
	\operatorname{Tr}_{k/\mathbb F_p}(\overline{\alpha}) = 0.
	\]
	
	Conversely, suppose that
	\[
	\operatorname{Tr}_{k/\mathbb F_p}(\overline{\alpha}) = 0.
	\]
	Then
	\[
	\overline{\alpha} \in \Ker \operatorname{Tr}_{k/\mathbb F_p} = \Im(F-1).
	\]
	Thus there exists $\overline{\gamma} \in k_v$ such that
	\[
	\overline{\alpha} = \overline{\gamma}^p - \overline{\gamma}.
	\]
	Choose a lift $\gamma \in \mathcal O_v$ of $\overline{\gamma}$. Then, by the same computation,
	\[
	(1+\pi\gamma)^p
	\equiv
	1+\pi^p(\gamma^p-\gamma)
	\pmod{\pi^p \mathfrak p}.
	\]
	Since
	\[
	\alpha - (\gamma^p-\gamma) \in \mathfrak p,
	\]
	we obtain
	\[
	1+\pi^p\alpha \in \mathcal O_v^{\times p}.
	\]
\end{proof}

\begin{lem}\label{tracezero}
	For \(1\leq i\leq r_v\), let
	\(L_{v,i}\) be a factor of \(L_v\), and let
	\(e_{v,i}=e(L_{v,i}/K_v)\) denote the ramification index of
	\(L_{v,i}/K_v\). Let \(k_{v,i}\) and \(k_v\) denote the residue
	fields of \(L_{v,i}\) and \(K_v\), respectively. Write
	\(\overline{T}_{v,i}\) for the reduction of \(T_{v,i}\).
	One has
	\[
	\sum_{i=1}^{r_v}
	e_{v,i}
	\operatorname{Tr}_{k_{v,i}/k_v}(\overline {T_{v,i}})
	=
	0
	\quad
	\text{in } k_v.
	\]
\end{lem}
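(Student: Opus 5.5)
The plan is to compute $\operatorname{Tr}_{L_v/K_v}(T_v)$ in two ways and reduce modulo $\mathfrak p_v$. Throughout, $f_0$ is taken in the translated coordinates supplied by Hypothesis~\ref{hyp:local}(3), and $T_v$ denotes the image of the translated variable. This convention matters. A translation $x\mapsto x+c$ changes the left-hand side by
\[
\sum_i e_{v,i}[k_{v,i}:k_v]\,\overline c=d\,\overline c,
\]
which is nonzero for $\overline c\neq 0$, since $p\nmid d$. So the identity can only hold in the coordinates where the hypothesis is imposed.

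\textbf{Step 1: global side.} Write $f_0(x)=x^d+c_1x^{d-1}+\cdots+c_d$. Since $L_v=K_v[x]/(f_0)$, the characteristic polynomial of multiplication by $T_v$ on $L_v$ is $f_0$ itself. Hence
\[
\operatorname{Tr}_{L_v/K_v}(T_v)=-c_1\in\mathfrak p_v
\]
by Hypothesis~\ref{hyp:local}(3). Under $L_v\simeq\prod_i L_{v,i}$, the element $T_v$ corresponds to $(T_{v,1},\dots,T_{v,r_v})$, so
\[
\operatorname{Tr}_{L_v/K_v}(T_v)=\sum_{i=1}^{r_v}\operatorname{Tr}_{L_{v,i}/K_v}(T_{v,i}).
\]
Each $T_{v,i}$ is a root of the monic integral polynomial $f_{v,i}$, so it lies in $\mathcal O_{L_{v,i}}$ and its reduction $\overline{T_{v,i}}$ makes sense.

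\textbf{Step 2: local trace congruence.} For a finite extension $M/K_v$ with ramification index $e$ and residue field $k_M$, and any $t\in\mathcal O_M$, I will show
\[
\operatorname{Tr}_{M/K_v}(t)\equiv e\cdot \operatorname{Tr}_{k_M/k_v}(\overline t)\pmod{\mathfrak p_v}.
\]
The argument goes as follows.
\begin{itemize}
\item $\mathcal O_M$ is free over $\mathcal O_v$, so $\operatorname{Tr}_{M/K_v}(t)$ is the trace of multiplication by $t$ on $\mathcal O_M$. Modulo $\mathfrak p_v$, it is the trace of $t$ acting on the $k_v$-vector space $\mathcal O_M/\mathfrak p_v\mathcal O_M=\mathcal O_M/\pi_M^{e}\mathcal O_M$.
\item This space has the $t$-stable filtration $\pi_M^j\mathcal O_M/\pi_M^e\mathcal O_M$ for $0\le j\le e$.
\item Each of the $e$ graded pieces $\pi_M^j\mathcal O_M/\pi_M^{j+1}\mathcal O_M$ is isomorphic to $k_M$ via division by $\pi_M^j$, and on it $t$ acts as multiplication by $\overline t$.
\item The trace is additive along a stable filtration, so the total trace is $e\cdot\operatorname{Tr}_{k_M/k_v}(\overline t)$.
\end{itemize}

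\textbf{Step 3: conclusion.} Applying Step 2 to $M=L_{v,i}$ and $t=T_{v,i}$, then summing over $i$ and using Step 1, gives
\[
\sum_{i=1}^{r_v}e_{v,i}\operatorname{Tr}_{k_{v,i}/k_v}(\overline{T_{v,i}})\equiv -c_1\equiv 0\pmod{\mathfrak p_v},
\]
which is the claim in $k_v$.

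The only step needing care is Step 2, namely the filtration argument identifying the reduced trace with $e$ times the residue trace. Everything else is bookkeeping with the characteristic polynomial. The coordinate convention noted at the start also has to be respected.
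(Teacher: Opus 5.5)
Your proof is correct and follows the same route as the paper. You identify $\operatorname{Tr}_{L_v/K_v}(T_v)$ with minus the $x^{d-1}$-coefficient of $f_0$, which lies in $\mathfrak p_v$ by Hypothesis~\ref{hyp:local}(3), split it over the factors $L_{v,i}$, and reduce each term using $\operatorname{Tr}_{L_{v,i}/K_v}(t)\equiv e_{v,i}\operatorname{Tr}_{k_{v,i}/k_v}(\overline t)\pmod{\mathfrak p_v}$; the paper only cites that congruence as standard, so your filtration proof of it and your remark that the identity holds only in the translated coordinates (because $p\nmid d$) are correct and worthwhile additions.
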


\begin{proof}
	The coefficient of \(x^{d-1}\) in \(f_0(x)\) is, up to sign, the trace
	\[
	\operatorname{Tr}_{L_v/K_v}(T_v)
	=
	\sum_{i=1}^{r_v}
	\operatorname{Tr}_{L_{v,i}/K_v}(T_{v,i}).
	\]
	By Hypothesis~\ref{hyp:local}, this trace lies in \(\mathfrak p_v\). Reducing modulo
	\(\mathfrak p_v\), and using the standard congruence
	\[
	\operatorname{Tr}_{L_{v,i}/K_v}(T_{v,i})
	\equiv
	e_{v,i}
	\operatorname{Tr}_{k_{v,i}/k_v}(\overline {T_{v,i}})
	\pmod{\mathfrak p_v},
	\]
	we obtain the desired relation. 
\end{proof}

\begin{proof}[Proof of Proposition~\ref{part:U4cardinality}]
	We first verify that $\Phi_v$ is a group homomorphism. Let $\alpha, \beta \in \mathcal O_{L_v}$. Then
	\[
	(1+\pi^p\alpha)(1+\pi^p\beta)
	=
	1+\pi^p(\alpha+\beta)+\pi^{2p}\alpha\beta
	=
	(1+\pi^p(\alpha+\beta))(1+\pi^{2p}z)
	\]
	where
	\[
	z = \frac{\alpha\beta}{1+\pi^p(\alpha+\beta)} \in \mathcal O_{L_v}.
	\]
	Indeed, $1+\pi^p(\alpha+\beta)$ is a unit in $\mathcal O_{L_v}$. By Lemma~\ref{lem:local-pth-power}, the element $1+\pi^{2p}z$ is a $p$-th power in $\mathcal O_{L_v}^{\times}$. Hence
	\[
	(1+\pi^p\alpha)(1+\pi^p\beta)
	\equiv
	1+\pi^p(\alpha+\beta)
	\pmod{\mathcal O_{L_v}^{\times p}}.
	\]
	It follows that $\Phi_v$ is a homomorphism.
	
	Next we prove that $\Phi_v$ is well-defined on classes modulo $\mathcal O_{L_v}^{\times p}$. Suppose that
	\[
	1+\pi^p\beta \in \mathcal O_{L_v}^{\times p}.
	\]
	Writing
	\[
	\beta = (\beta_1,\ldots,\beta_{r_v}), \qquad \beta_i \in \mathcal O_{v,i},
	\]
	we obtain, for every $i = 1,\ldots,r_v$,
	$
	1+\pi^p\beta_i \in \mathcal O_{v,i}^{\times p}.
	$
	By Lemma~\ref{Trace condition}, this is equivalent to
	\[
	\operatorname{Tr}_{k_{v,i}/\mathbb F_p}(\overline{\beta}_i) = 0.
	\]
	Therefore
	$\Phi_v$ is well-defined on $\mathcal U_v$.
	
	We now show that $\Phi_v$ is injective. If
	$
	\Phi_v(1+\pi^p\beta) = 0,
	$
	then
	$
	\operatorname{Tr}_{k_{v,i}/\mathbb F_p}(\overline{\beta}_i) = 0
	\quad \text{for all } i = 1,\ldots,r_v.
	$
	Again by Lemma~\ref{Trace condition}, each component $1+\pi^p\beta_i$ is a $p$-th power in $\mathcal O_{v,i}^{\times}$. Hence
	\[
	1+\pi^p\beta \in \mathcal O_{L_v}^{\times p}.
	\]
	Thus the class of $1+\pi^p\beta$ is trivial in $\mathcal U_v$, so $\Phi_v$ is injective.
	
	It remains to determine the image. Let
	$
	\beta = (\beta_1,\ldots,\beta_{r_v}) \in \mathcal O_{L_v}.
	$
	The norm condition defining $\mathcal U_v$ says that
	\[
	N_v(1+\pi^p\beta) \in K_v^{\times p}.
	\]
	Expanding the norm componentwise, we have
	\[
	N_{L_{v,i}/K_v}(1+\pi^p\beta_i)
	\equiv
	1+\pi^p \operatorname{Tr}_{L_{v,i}/K_v}(\beta_i)
	\pmod{\pi^p \mathfrak p_v}.
	\]
	Reducing the trace modulo $\mathfrak p_v$, one obtains
	\[
	\operatorname{Tr}_{L_{v,i}/K_v}(\beta_i)
	\equiv
	e_{v,i} \operatorname{Tr}_{k_{v,i}/k_v}(\overline{\beta}_i)
	\pmod{\mathfrak p_v}.
	\]
	Therefore
	\[
	N(1+\pi^p\beta)
	\equiv
	1+\pi^p
	\sum_{i=1}^{r_v}
	e_{v,i} \operatorname{Tr}_{k_{v,i}/k_v}(\overline{\beta}_i)
	\pmod{\pi^p \mathfrak p_v}.
	\]
	By Lemma~\ref{Trace condition}, this norm is a $p$-th power in $K_v^\times$ if and only if
	\[
	\sum_{i=1}^{r_v}
	e_{v,i}
	\operatorname{Tr}_{k_{v,i}/\mathbb F_p}(\overline{\beta}_i)
	=
	0.
	\]
	In other words,
	$
	\Phi_v(\mathcal U_v) \subset S_v.
	$
	
	Conversely, let
	$
	(s_1,\ldots,s_{r_v}) \in S_v.
	$
	Since the trace maps
	$
	\operatorname{Tr}_{k_{v,i}/\mathbb F_p} : k_{v,i} \longrightarrow \mathbb F_p
	$
	are nonzero $\mathbb F_p$-linear maps, they are surjective. Hence we may choose $\overline{\beta}_i \in k_{v,i}$ such that
	\[
	\operatorname{Tr}_{k_{v,i}/\mathbb F_p}(\overline{\beta}_i) = s_i.
	\]
	Choose lifts $\beta_i \in \mathcal O_{v,i}$, and put
	$
	\beta = (\beta_1,\ldots,\beta_{r_v}) \in \mathcal O_{L_v}.
	$
	Because $(s_1,\ldots,s_{r_v}) \in S_v$, we have
	\[
	\sum_{i=1}^{r_v} e_{v,i} s_i = 0.
	\]
	Thus the preceding norm computation shows that
	\[
	N(1+\pi^p\beta) \in K_v^{\times p}.
	\]
	Hence $1+\pi^p\beta$ represents an element of $\mathcal U_v$, and by construction
	\[
	\Phi_v(1+\pi^p\beta) = (s_1,\ldots,s_{r_v}).
	\]
	Therefore $S_v\subset \Phi_v(\mathcal U_v)  $.
	
	We have proved that $\Phi_v$ is a well-defined bijective homomorphism. Thus
	\[
	\mathcal U_v \simeq S_v.
	\]
	Since
	$
	\dim_{\mathbb F_p} S_v = r_v - 1.
	$
	Then,
	\[
	\dim_{\mathbb F_p} \mathcal U_v = r_v - 1.
	\]
\end{proof}

\begin{defn}\label{Vv}
	For \(1\leq i\leq r_v\), let \(k_{v,i}\) denote the residue field of
	\(L_{v,i}\), and let \(k_v\) denote the residue field of \(K_v\).
	Write \(\overline{{T}_{v,i}}\) for the reduction of \(T_{v,i}\).
	Define the trace space
	\[
	V_v
	=
	\left\langle
	\operatorname{Tr}_{k_{v,i}/k_v}(\overline {T_{v,i}})
	:
	i=1,\ldots,r_v
	\right\rangle_{\mathbb F_p}
	\subset k_v.
	\]
\end{defn}
\begin{thm}\label{Wv}
	One has
	\[
	\dim_{\mathbb F_p}W_v
	=
	\dim_{\mathbb F_p}V_v.
	\]
	Consequently,
	\[
	[\mathcal U_v:W_v]
	=
	p^{r_v-1-\dim_{\mathbb F_p}V_v}.
	\]
\end{thm}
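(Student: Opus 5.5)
The plan is to compute $W_v$ directly through the isomorphism $\Phi_v:\mathcal U_v\xrightarrow{\sim} S_v$ of Proposition~\ref{part:U4cardinality}. Since $W_v\subseteq\mathcal U_v$ and $\Phi_v$ is injective, we have $\dim_{\mathbb F_p}W_v=\dim_{\mathbb F_p}\Phi_v(W_v)$. So it suffices to identify $\Phi_v(W_v)$ with a subspace of $\mathbb F_p^{r_v}$ of dimension $\dim V_v$. The index statement then follows from $\dim\mathcal U_v=r_v-1$.

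\textbf{Step 1: evaluate $\Phi_v$ on $W_v$.} For $w\in\mathcal O_v$ the element $1-\pi^pT_vw^p$ is $1+\pi^p\beta$ with $\beta_i=-w^pT_{v,i}$. Its reduction is $\overline\beta_i=-\overline w^{\,p}\,\overline{T_{v,i}}$. Using transitivity of traces $\operatorname{Tr}_{k_{v,i}/\mathbb F_p}=\operatorname{Tr}_{k_v/\mathbb F_p}\circ\operatorname{Tr}_{k_{v,i}/k_v}$, and the fact that $\overline w^{\,p}\in k_v$ can be pulled out of the inner trace, we get
\[
\Phi_v(1-\pi^pT_vw^p)=\Bigl(-\operatorname{Tr}_{k_v/\mathbb F_p}\bigl(\overline w^{\,p}\,t_i\bigr)\Bigr)_{i=1}^{r_v},\qquad t_i:=\operatorname{Tr}_{k_{v,i}/k_v}(\overline{T_{v,i}}).
\]
As $w$ runs over $\mathcal O_v$, $u:=\overline w^{\,p}$ runs over all of $k_v$, because Frobenius is bijective on a finite field. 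Hence $\Phi_v(W_v)$ is the image of the $\mathbb F_p$-linear map
\[
\psi:k_v\to\mathbb F_p^{r_v},\qquad u\mapsto\bigl(\operatorname{Tr}_{k_v/\mathbb F_p}(u t_i)\bigr)_i .
\]
In particular, $\Phi_v(W_v)$ is a subspace. Lemma~\ref{tracezero} confirms that it lies in $S_v$, consistent with $W_v\subseteq\mathcal U_v$.

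\textbf{Step 2: compute $\dim\Im\psi$ by duality.} I would identify $\dim\Im\psi$ with the rank of the $\mathbb F_p$-linear map $\theta:\mathbb F_p^{r_v}\to k_v$, $(c_i)\mapsto\sum c_it_i$, whose image is exactly $V_v$. The two maps are related through the nondegenerate trace pairing $\langle u,x\rangle=\operatorname{Tr}_{k_v/\mathbb F_p}(ux)$ on $k_v$ and the standard dot product on $\mathbb F_p^{r_v}$. Indeed, for every $c\in\mathbb F_p^{r_v}$ and $u\in k_v$,
\[
\psi(u)\cdot c=\operatorname{Tr}_{k_v/\mathbb F_p}\Bigl(u\sum_i c_it_i\Bigr)=\langle u,\theta(c)\rangle ,
\]
so $\psi$ is the transpose of $\theta$. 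Therefore $\operatorname{rank}\psi=\operatorname{rank}\theta=\dim_{\mathbb F_p}V_v$, and hence $\dim W_v=\dim V_v$.

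\textbf{Expected obstacle.} The only delicate point is Step 1: justifying that $\Phi_v$ applied to the class $[1-\pi^pT_vw^p]$ is computed by the representative $\beta=-w^pT_v$. This needs $\Phi_v$ to be well defined on classes, which Proposition~\ref{part:U4cardinality} provides. It also needs each element of $W_v$ to lie in the norm-one part, which Lemma~\ref{tracezero} and the norm computation in that proof ensure. It also remains to note that $W_v$ is closed under multiplication. This holds because $\Phi_v(W_v)=\Im\psi$ is a subspace and $\Phi_v$ is injective, so $W_v=\Phi_v^{-1}(\Im\psi)$ is a subgroup, and its dimension is meaningful.
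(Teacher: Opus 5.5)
Your proposal is correct and follows the paper's proof: you evaluate $\Phi_v$ on $[1-\pi^pT_vw^p]$, use bijectivity of Frobenius on $k_v$ to identify $\Phi_v(W_v)$ with the image of $u\mapsto(\operatorname{Tr}_{k_v/\mathbb F_p}(u\,t_i))_i$, and conclude by nondegeneracy of the trace pairing. Your transpose formulation of the rank equality and your check that $W_v=\Phi_v^{-1}(\Im\psi)$ is a subgroup make explicit two points the paper leaves implicit.
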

\begin{proof}
	Recall that \(W_v\subset\Im(\delta_v)\). Under the isomorphism
	$
	\Phi_v:\mathcal U_v\xrightarrow{\sim} S_v,
	$
	we have
	\[
	\begin{aligned}
		\Phi_v(1-\pi^pT_vw^p)
		=
		\bigl(
		-\operatorname{Tr}_{k_{v,1}/\mathbb F_p}
		(\overline {T_{v,1}}\overline w^{\,p}),
		\ldots,
		-\operatorname{Tr}_{k_{v,r_v}/\mathbb F_p}
		(\overline {T_{v,r_v}}\overline w^{\,p})
		\bigr).
	\end{aligned}
	\]
	Since the Frobenius map is an automorphism of \(k_v\), the set of
	\(\overline w^{\,p}\) as \(w\) varies through \(\mathcal O_v\) is the whole
	field \(k_v\). Therefore \(\Phi_v(W_v)\) is the image of the linear map
	\[
	k_v
	\longrightarrow
	\mathbb F_p^{r_v},
	\qquad
	u\longmapsto
	\left(
	\operatorname{Tr}_{k_v/\mathbb F_p}
	\bigl(
	\operatorname{Tr}_{k_{v,1}/k_v}(\overline{ T_{v,1}})u
	\bigr),
	\ldots,
	\operatorname{Tr}_{k_v/\mathbb F_p}
	\bigl(
	\operatorname{Tr}_{k_{v,r_v}/k_v}(\overline {T_{v,r_v}})u
	\bigr)
	\right).
	\]
	The trace pairing
	$
	k_v\times k_v\longrightarrow \mathbb F_p,
	(a,u)\longmapsto \operatorname{Tr}_{k_v/\mathbb F_p}(au)
	$
	is perfect. Hence the dimension of the image is exactly the dimension of the
	\(\mathbb F_p\)-span of
	$
	\operatorname{Tr}_{k_{v,i}/k_v}(\overline {T_{v,i}}),
	i=1,\ldots,r_v.
	$
	By definition, this dimension is
	$
	\dim_{\mathbb F_p}V_v.
	$
	Therefore
	$
	\dim_{\mathbb F_p}W_v=\dim_{\mathbb F_p}V_v.
	$
	Since
	$
	\dim_{\mathbb F_p}\mathcal U_v=r_v-1,
	$
	we obtain
	$
	[\mathcal U_v:W_v]
	=
	p^{r_v-1-\dim_{\mathbb F_p}V_v}.
	$
\end{proof}

\section{The \texorpdfstring{\(\Pi\)}{Pi}-Selmer group and class group estimates}
\label{sec:selmer-class-group}

\subsection{$\Pi$-Selmer Group}
The short exact sequence of \(G_K\)-modules
\[
0 \longrightarrow J[\Pi] \longrightarrow J(\overline K) \xrightarrow{\Pi} J(\overline K) \longrightarrow 0
\]
gives rise to the global Kummer exact sequence
\[
0 \longrightarrow J(K)/\Pi J(K) \xrightarrow{\kappa_{K,\Pi}} H^1(K,J[\Pi]) \longrightarrow H^1(K,J)[\Pi].
\]
For every place \(v\) of \(K\), the same construction over \(K_v\) yields
\[
0 \longrightarrow J(K_v)/\Pi J(K_v) \xrightarrow{\kappa_{K_v,\Pi}} H^1(K_v,J[\Pi]) \longrightarrow H^1(K_v,J)[\Pi].
\]
For each place \(v\) of \(K\), we denote by \(\operatorname{loc}_v\) the localization map
$
H^1(K,J[\Pi]) \longrightarrow H^1(K_v,J[\Pi]).
$
For a class \([\alpha] \in \left(L^\times/L^{\times p}\right)_{N\equiv 1}\), we write \([\alpha]_v\) for its image in
$
\left(L_v^\times/L_v^{\times p}\right)_{N\equiv 1}.
$
\begin{defn}
	The \(\Pi\)-Selmer group of \(J\) over \(K\) is
	$$
	\operatorname{Sel}_{\Pi}(J/K)
	=
	\left\{
	\xi\in H^1(K,J[\Pi])
	:
	\operatorname{loc}_v(\xi)\in \Im(\delta_v)
	, \forall v\in \Sigma
	\right\}.
	$$
\end{defn}
Using the isomorphism in Proposition~\ref{prop:cohom}(3), we may regard the Selmer group as a subgroup
of
$
\left(L^\times/L^{\times p}\right)_{N\equiv 1}.
$
More precisely,
$$
\operatorname{Sel}_{\Pi}(J/K)
=
\left\{
[\alpha]\in
\left(L^\times/L^{\times p}\right)_{N\equiv 1}
:
[\alpha]_v\in \Im(\delta_v)
,\forall v\in \Sigma
\right\}.
$$
Thus the global Selmer group is obtained by imposing the local Kummer
conditions computed in Section~\ref{sec:local-kummer-images}. 
The local comparisons in Theorem~\ref{localcomputation} motivate the auxiliary Selmer structures
introduced below.

Recall that \(K\) is a number field containing \(\zeta_p\), \(C/K\) is the
superelliptic curve
$
C:y^p=f(x),
$
and \(J=\operatorname{Jac}(C)\). Put
$
L=K[x]/(f_0).
$
Writing \(f_0=f_1\cdots f_r\) as a product of monic irreducible
polynomials over \(K\), we have
$
L\simeq\prod_{i=1}^r L_i,
L_i=K[x]/(f_i),
$
and set
$
\operatorname{Cl}(L)
:=\prod_{i=1}^r\operatorname{Cl}(L_i).
$
We use the analogous product notation for ideals, rings of integers,
and localizations. For
\(\alpha=(\alpha_i)_{i=1}^r\in L^\times\), put
$
L(\sqrt[p]{\alpha})
:=\prod_{i=1}^rL_i(\sqrt[p]{\alpha_i}).
$
We say that \(L(\sqrt[p]{\alpha})/L\) is unramified above a place \(v\)
of \(K\) if each extension
\(L_i(\sqrt[p]{\alpha_i})/L_i\) is unramified at every place of \(L_i\)
above \(v\).

\subsection{Auxiliary Selmer structures}

We now introduce three Selmer structures on \(J[\Pi]\).

For a finite place \(v\), define the unramified local condition by
\[
\mathcal F_{\mathrm{ur}}(v)
:=
\left\{
\xi_v\in (L_v^\times/L_v^{\times p})_{\N\equiv1}:
L_v(\sqrt[p]{\xi_v})/L_v
\text{ is unramified}
\right\}.
\]
At archimedean places we take the full local condition:
\[
\mathcal F_{\mathrm{ur}}(v)
:=
H^1(K_v,J[\Pi]).
\]
The associated Selmer group is
\[
H^1_{\mathcal F_{\mathrm{ur}}}(K,J[\Pi])
:=
\left\{
\xi\in H^1(K,J[\Pi]):
\operatorname{loc}_v(\xi)\in \mathcal F_{\mathrm{ur}}(v)
, \forall v \in \Sigma
\right\}.
\]

Next define the unit local condition.  For a finite place \(v\), put
\[
\mathcal F_{\mathrm{unit}}(v)
:=
\left(
\mathcal O_{L_v}^{\times}/
\mathcal O_{L_v}^{\times p}
\right)_{\N\equiv1},
\]
and at archimedean places again take the full local condition.  The
corresponding Selmer group is
\[
H^1_{\mathcal F_{\mathrm{unit}}}(K,J[\Pi])
:=
\left\{
\xi\in H^1(K,J[\Pi]):
\operatorname{loc}_v(\xi)\in\mathcal F_{\mathrm{unit}}(v)
, \forall v \in \Sigma
\right\}.
\]

Finally, for each finite place \(v\mid p\), let
\[
W_v\subset \CU_{v}\subset \mathcal O_{L_v}^{\times}
\]
be the local subgroup defined in Section~\ref{sec:local-kummer-images}.  Define a third Selmer
structure \(\mathcal F_W\) by
\[
\mathcal F_W(v)
:=
\begin{cases}
	\mathcal F_{\mathrm{ur}}(v),
	& v\nmid p,\\[4pt]
	\mathcal F_{\mathrm{ur}}(v) \cap
	\Im\left(
	W_v\longrightarrow (L_v^\times/L_v^{\times p})_{\N\equiv1}
	\right),
	& v\mid p,
\end{cases}
\]
for finite places \(v\), and by the full local condition at archimedean
places.  The corresponding Selmer group is
\[
H^1_{\mathcal F_W}(K,J[\Pi])
:=
\left\{
\xi\in H^1(K,J[\Pi]):
\operatorname{loc}_v(\xi)\in\mathcal F_W(v)
, \forall v \in \Sigma
\right\}.
\]

By construction one has inclusions of Selmer structures
\[
\mathcal F_W\subset
\mathcal F_{\mathrm{ur}}\subset
\mathcal F_{\mathrm{unit}},
\]
and hence
\[
H^1_{\mathcal F_W}(K,J[\Pi])
\subset
H^1_{\mathcal F_{\mathrm{ur}}}(K,J[\Pi])
\subset
H^1_{\mathcal F_{\mathrm{unit}}}(K,J[\Pi]).
\]
\begin{lem}
	The set $H^1_{\mathcal F_{\mathrm{ur}}}(K,J[\Pi])$ equals the set of elements $[\alpha] \in \HK$ satisfying the following local conditions
	\begin{enumerate}
		\item for finite places $v\nmid p$, the v-adic valuation of $\alpha$ is divisible by $p$.
		\item for finite places $v\mid p$, $\alpha = 1+\pi^p\CO_{L_{v}}$
	\end{enumerate}
\end{lem}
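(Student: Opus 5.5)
Since $H^1_{\mathcal F_{\mathrm{ur}}}(K,J[\Pi])$ is cut out place by place, it suffices to describe $\mathcal F_{\mathrm{ur}}(v)$ locally. We identify each local condition with the stated one inside $(L_v^\times/L_v^{\times p})_{\N\equiv1}$, using $L_v=\prod_i L_{v,i}$ and $\xi_v=(\xi_{v,i})$. Ramification is checked factor by factor, so everything reduces to Kummer theory for a single local field $L_{v,i}\supset K_v\ni\zeta_p$. Archimedean places impose nothing: the condition there is the full group, which is trivial since $K$ is totally imaginary.

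\emph{Places $v\nmid p$.} Here the residue characteristic is prime to $p$, and $\mu_p\subset L_{v,i}$. By the standard criterion for tame Kummer extensions, $L_{v,i}(\sqrt[p]{a})/L_{v,i}$ is unramified if and only if $p\mid w(a)$, where $w$ is the normalized valuation of $L_{v,i}$. To see this, write $a=\varpi^{k}u$ with $u$ a unit. The extension $L_{v,i}(\sqrt[p]{u})$ is unramified because $X^p-u$ is separable modulo the maximal ideal. Conversely, if $p\nmid k$, the ramification index of $L_{v,i}(\sqrt[p]{a})$ is divisible by $p$. Applying this to every place of $L$ above $v$ gives condition (1). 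It is also compatible with classes modulo $L_v^{\times p}$, as it must be.

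\emph{Places $v\mid p$.} Here I will show that $L_{v,i}(\sqrt[p]{a})/L_{v,i}$ is unramified if and only if $a\in(1+\pi^p\mathcal O_{L_{v,i}})L_{v,i}^{\times p}$. This is how condition (2) is to be read: the class $[\alpha]_v$ has a representative in $1+\pi^p\mathcal O_{L_v}$.

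For the direction $\Leftarrow$, let $a=1+\pi^p\beta$ and substitute $X=1+\pi Y$ into $X^p-a$. By the expansion in the proof of Lemma~\ref{Trace condition}, using $p=-u\pi^{p-1}$ with $u\equiv1 \pmod{\pi}$, this becomes $\pi^p\bigl(Y^p-uY+(\text{terms divisible by }\pi)-\beta\bigr)$. The reduction of the bracket is the Artin--Schreier polynomial $Y^p-Y-\bar\beta$, which is separable, so the extension is unramified.

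For the direction $\Rightarrow$, let $M=L_{v,i}^{\mathrm{ur},p}$ be the unique unramified extension of degree $p$; it is obtained by adjoining a root of the Artin--Schreier polynomial. By Lemma~\ref{Trace condition}, it equals $L_{v,i}(\sqrt[p]{1+\pi^p\beta_0})$ for any $\beta_0$ whose residue has nonzero trace. Now suppose $L_{v,i}(\sqrt[p]{a})$ is unramified. Then it is either trivial or equal to $M$. By Kummer theory the subgroup of $L_{v,i}^\times/L_{v,i}^{\times p}$ it determines is generated by $[a]$, and $M$ corresponds to $\langle[1+\pi^p\beta_0]\rangle$. Hence $[a]$ is a power of $[1+\pi^p\beta_0]$. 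That power again lies in $1+\pi^p\mathcal O_{L_{v,i}}$ modulo $p$-th powers, by the multiplicativity computation in the proof of Proposition~\ref{part:U4cardinality}.

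The norm condition $\N\equiv 1$ is already part of the ambient group, so it plays no role in either direction. Assembling the places gives the lemma.

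The hardest step is the $v\mid p$ case, specifically the claim that the unramified subgroup of $L_{v,i}^\times/L_{v,i}^{\times p}$ is exactly one-dimensional and generated by $1+\pi^p\beta_0$. The substitution $X=1+\pi Y$ and Lemma~\ref{Trace condition} give the explicit generator and show it is nontrivial. The uniqueness of the unramified degree-$p$ extension then forces equality.
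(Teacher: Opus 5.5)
Your proof is correct, but it takes a different route from the paper. The paper gives no argument: it cites the standard local Kummer criterion (Cohen--Gras), which describes the unramified classes in $F^\times/F^{\times p}$, for a local field $F\ni\zeta_p$, through the valuation condition when $p\nmid$ the residue characteristic and through the unit filtration otherwise.

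You instead prove that criterion for $p$-th roots directly:
\begin{itemize}
\item at $v\nmid p$, from the separability of $X^p-u$ and a value-group argument;
\item at $v\mid p$, by substituting $X=1+\pi Y$, which turns $X^p-(1+\pi^p\beta)$ into a monic integral polynomial reducing to the Artin--Schreier polynomial $Y^p-Y-\bar\beta$. You then combine this with Lemma~\ref{Trace condition} applied to $L_{v,i}$ (legitimate, since its proof only uses $\mathbb{Q}_p(\zeta_p)\subset L_{v,i}$), the uniqueness of the unramified degree-$p$ extension, and the Kummer correspondence.
\end{itemize}

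The citation buys brevity. Your route is self-contained, reuses the paper's own local lemmas, and makes precise the reading that the statement leaves loose. In (1), valuations must be normalized on each factor $L_{v,i}$. In (2), the requirement is that the class $[\alpha]_v$ has a representative in $1+\pi^p\mathcal{O}_{L_v}$.

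More usefully, your argument shows that the unramified classes in each $L_{v,i}^\times/L_{v,i}^{\times p}$ form exactly the line spanned by $[1+\pi^p\beta_0]$. Together with the norm condition, this identifies $\mathcal F_{\mathrm{ur}}(v)$ with $\mathcal{U}_v$ for $v\mid p$. That identification is precisely what the proof of Proposition~\ref{prop:ur-W-index} uses without saying so when it bounds $\dim_{\mathbb F_p}\mathcal F_{\mathrm{ur}}(v)/\mathcal F_W(v)$ by $r_v-1-\dim_{\mathbb F_p}W_v$.
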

\begin{proof}
	This is a standard local form of Kummer theory; it appears in
	the book of Henri Cohen and Georges Gras, see \cite[Chapter~I, Theorem~6.3]{RN45}.	
\end{proof}

\begin{prop}\label{prop:selmer-sandwich}
	Assume Hypothesis~\ref{hyp:local}.  Then
	\[
	H^1_{\mathcal F_W}(K,J[\Pi])
	\subset
	\operatorname{Sel}_{\Pi}(J)
	\subset
	H^1_{\mathcal F_{\mathrm{unit}}}(K,J[\Pi]).
	\]
\end{prop}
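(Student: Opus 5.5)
The proof is a place-by-place comparison of local conditions. Both Selmer groups are cut out of $H^1(K,J[\Pi])\cong\HK$ by local conditions at every place, and all three structures impose the full condition at archimedean places. Since $K$ is totally imaginary, $\Im(\delta_v)$ is also the whole (trivial) group there. So it suffices to prove, for every finite place $v$,
\[
\mathcal F_W(v)\subset \Im(\delta_v)\subset \mathcal F_{\mathrm{unit}}(v).
\]

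The right-hand inclusion is exactly Proposition~\ref{thm:integrality}, which uses Hypothesis~\ref{hyp:local} through $(\dagger)_v$. This gives $\Im(\delta_v)\subset\HOv=\mathcal F_{\mathrm{unit}}(v)$ for all finite $v$, whether or not $v\mid p$.

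For the left-hand inclusion we split into two cases.

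\emph{Case $v\nmid p$.} Here $\mathcal F_W(v)=\mathcal F_{\mathrm{ur}}(v)$. An element of $(L_v^\times/L_v^{\times p})_{\N\equiv1}$ generating an unramified Kummer extension of each $L_{v,i}$ has valuation divisible by $p$ in each component, since the residue characteristic is prime to $p$. After multiplying by a $p$-th power it is therefore a unit, so $\mathcal F_{\mathrm{ur}}(v)\subset\HOv$. By Theorem~\ref{localcomputation}(1), $\HOv=\Im(\delta_v)$. Conversely, units give unramified extensions, so the two conditions actually coincide.

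\emph{Case $v\mid p$.} By definition $\mathcal F_W(v)\subset \Im(W_v\to (L_v^\times/L_v^{\times p})_{\N\equiv1})$. Each element of $W_v$ is $\delta_v(P_w-\infty)$ for the point $P_w\in C(K_v)$ built from the formal-group parametrization. The computation preceding the definition of $W_v$, which uses Lemma~\ref{lem:local-pth-power} to discard the factor $1+\pi^{2p-1}w^pA(\pi w)/(1-\pi^pT_vw^p)$, shows $W_v\subset\Im(\delta_v)$. Since $\Im(\delta_v)$ is a subgroup, the subgroup generated by $W_v$ also lies in $\Im(\delta_v)$. Hence $\mathcal F_W(v)\subset\Im(\delta_v)$.

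Combining the local inclusions over all $v$ gives the sandwich. The step needing the most care is the $v\mid p$ identification of $W_v$ inside $\Im(\delta_v)$. Two points must be checked there.

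\begin{itemize}
\item The Laurent expansions converge and define genuine $K_v$-points $P_w$. This holds because $z=\pi w$ lies in the maximal ideal.
\item The discarded factor really is a $p$-th power. This needs the valuation bound $v(\pi^{2p-1})>ep/(p-1)$ in Lemma~\ref{lem:local-pth-power}. If $v(w)>0$ or $T_v$ is non-integral, one should also confirm that the division by $1-\pi^pT_vw^p$ keeps the term integral. I would first verify this using the integrality of $T_v$ (since $f_0$ is monic integral) and $A\in\CO_v[[z]]$.
\end{itemize}

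The $v\nmid p$ case is routine Kummer theory, as in the preceding lemma.
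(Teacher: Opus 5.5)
Your proof is correct and uses the same place-by-place comparison as the paper: Proposition~\ref{thm:integrality} gives the right inclusion, Theorem~\ref{localcomputation}(1) gives $\mathcal F_{\mathrm{ur}}(v)\subset\HOv=\Im(\delta_v)$ for $v\nmid p$, the formal-group points $P_w$ give $W_v\subset\Im(\delta_v)$ for $v\mid p$, and archimedean places impose nothing. In the check you flag, integrality is automatic because $T_v$ is integral and so $1-\pi^pT_vw^p\in\CO_{L_v}^\times$; the real subtlety is that inverting $1/x=z^p-a_1z^{2p}+\cdots$ gives $x(z)=z^{-p}+a_1+O(z^p)$, so in fact $\delta_v(P_w-\infty)=[1-\pi^p(T_v-a_1)w^p]$, and this equals $[1-\pi^pT_vw^p]$ by Lemma~\ref{lem:local-pth-power} provided $a_1\in\mathfrak p_v$, which Hypothesis~\ref{hyp:local}(3) ensures after translation when $f=f_0$.
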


\begin{proof}
	Let \(v\) be a finite place of \(K\).  If \(v\nmid p\), the local
	description of the Kummer image from Section~3 shows that the unramified
	local condition is contained in \(\Im(\delta_v)\).  If
	\(v\mid p\), Theorem~\ref{Wv} gives
	\[
	\Im\left(
	W_v\to L_v^\times/L_v^{\times p}
	\right)
	\subset
	\Im(\delta_v).
	\]
	Thus every class in
	\(H^1_{\mathcal F_W}(K,J[\Pi])\) satisfies the local Selmer condition at
	every finite place.
	
	By Proposition~\ref{thm:integrality} and Hypothesis~\ref{hyp:local}, one has
	\[
	\Im(\delta_v)
	\subset
	\left(
	\mathcal O_{L_v}^{\times}/
	\mathcal O_{L_v}^{\times p}
	\right)_{\N\equiv1}
	=
	\mathcal F_{\mathrm{unit}}(v)
	\]
	for every finite place \(v\).  Therefore every element of
	\(\operatorname{Sel}_{\Pi}(J)\) lies in
	\(H^1_{\mathcal F_{\mathrm{unit}}}(K,J[\Pi])\).  The archimedean places
	impose no additional condition in the present comparison.
\end{proof}

Let
\[
\operatorname{Cl}(L)/p\operatorname{Cl}(L)_{\N\equiv0}
:=
\Ker\left(
\N_{L/K}:
\operatorname{Cl}(L)/p\operatorname{Cl}(L)
\longrightarrow
\operatorname{Cl}(K)/p\operatorname{Cl}(K)
\right),
\]
\[
\operatorname{Cl}(L)[p]_{\N\equiv0}
:=
\Ker\left(
\N_{L/K}:\operatorname{Cl}(L)[p]
\longrightarrow
\operatorname{Cl}(K)[p]
\right).
\]

\begin{prop}\label{prop:unramified-classgroup}
	There is a canonical perfect pairing
	\[
	H^1_{\mathcal F_{\mathrm{ur}}}(K,J[\Pi])
	\times
	\left(
	\operatorname{Cl}(L)/p\operatorname{Cl}(L)
	\right)_{\N\equiv 0}
	\longrightarrow
	\mu_p.
	\]
	In particular,
	\[
	\dim_{\mathbb F_p}
	H^1_{\mathcal F_{\mathrm{ur}}}(K,J[\Pi])
	=
	\dim_{\mathbb F_p}
	\operatorname{Cl}(L)[p]_{\N\equiv 0}.
	\]
\end{prop}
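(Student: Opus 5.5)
Our plan is to realize $H^1_{\mathcal F_{\mathrm{ur}}}(K,J[\Pi])$ inside $\Hom(\operatorname{Cl}(L),\mu_p)$ via class field theory, cut out by the norm condition, and then pass to duals. By Proposition~\ref{prop:cohom}(3) we work inside $(L^\times/L^{\times p})_{\N\equiv1}$. Since $\zeta_p\in K\subset L_i$, Kummer theory identifies $L_i^\times/L_i^{\times p}$ with $\Hom(G_{L_i},\mu_p)$. Under this identification, the classes $[\alpha]$ for which $L(\sqrt[p]{\alpha})/L$ is unramified everywhere correspond to characters of $G_L$ factoring through the Galois group of the maximal unramified abelian exponent-$p$ extension. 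Archimedean places impose nothing, since $K$ is totally complex.

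By Artin reciprocity, this group of unramified Kummer classes is therefore canonically $\Hom(\operatorname{Cl}(L)/p\operatorname{Cl}(L),\mu_p)$. The pairing is $\langle[\alpha],[\mathfrak a]\rangle=\bigl(\sqrt[p]{\alpha}\bigr)^{\mathrm{Frob}_{\mathfrak a}-1}$, i.e.\ the Artin symbol. One must check that an unramified class lies in $\mathcal O_{L_v}^\times L_v^{\times p}$ locally, which is exactly the description in the preceding lemma, using the local criterion of Lemma~\ref{Trace condition} and Lemma~\ref{lem:local-pth-power} at $v\mid p$. Consequently, without the norm condition we get a perfect pairing between the unramified classes in $L^\times/L^{\times p}$ and $\operatorname{Cl}(L)/p$.

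Next we must match the norm condition $\N(\alpha)\in K^{\times p}$ with the annihilator of $(\operatorname{Cl}(L)/p)_{\N\equiv0}$. Norm and restriction are adjoint under Artin reciprocity:
\[
\langle \N_{L/K}\alpha,\mathfrak b\rangle_K=\langle\alpha,\mathfrak b\mathcal O_L\rangle_L ,
\]
and the analogous relation holds with the roles of extension and norm of ideals swapped. Using $p\nmid d$, the composite $\operatorname{Cl}(K)\to\operatorname{Cl}(L)\xrightarrow{\N}\operatorname{Cl}(K)$ is multiplication by $d$, hence an isomorphism mod $p$. This gives a splitting
\[
\operatorname{Cl}(L)/p=(\operatorname{Cl}(L)/p)_{\N\equiv0}\oplus \iota(\operatorname{Cl}(K)/p).
\]
On the dual side, the unramified Kummer group splits as the norm-one part plus the image of unramified classes from $K$. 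We then verify that the norm-one unramified classes pair trivially with $\iota(\operatorname{Cl}(K)/p)$. For this we take $\mathfrak a=\mathfrak b\mathcal O_L$ and apply the adjointness to reduce to $\langle\N\alpha,\mathfrak b\rangle_K=1$, which holds because $\N\alpha$ is a $p$-th power. A dimension count then yields perfectness of the restricted pairing.

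The main obstacle is this compatibility, in particular the care needed when $\N\alpha\in K^{\times p}$ while $\alpha$ is only known up to $L^{\times p}$, and the need to handle the product $L=\prod L_i$ componentwise. Finally, since $\operatorname{Cl}(L)$ is finite, the decomposition above gives
\[
\dim(\operatorname{Cl}(L)/p)_{\N\equiv0}=\dim\operatorname{Cl}(L)/p-\dim\operatorname{Cl}(K)/p=\dim\operatorname{Cl}(L)[p]-\dim\operatorname{Cl}(K)[p].
\]
The same decomposition holds for $[p]$, since $\N\circ\iota=d$ is invertible on $\operatorname{Cl}(K)[p]$ as well. Hence this dimension equals $\dim\operatorname{Cl}(L)[p]_{\N\equiv0}$, which gives the equality of dimensions.
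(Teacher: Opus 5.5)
Your proposal is correct and takes essentially the same approach as the paper: Kummer theory plus Artin reciprocity identify the unramified classes with $\Hom(\operatorname{Cl}(L)/p\operatorname{Cl}(L),\mu_p)$, the norm condition is handled by the adjointness of norm (corestriction) with extension of ideals, and $p\nmid d$ identifies the quotient by $\iota(\operatorname{Cl}(K))$ with the norm kernel. The differences are minor: the paper reads off $\ker(\operatorname{Cor})$ directly as the annihilator of $\iota(\operatorname{Cl}(K))$ where you use a splitting and a dimension count, your use of $\N\circ\iota=d$ on both $\operatorname{Cl}(L)/p$ and $\operatorname{Cl}(L)[p]$ justifies the final dimension equality more carefully than the paper's appeal to finiteness alone, and your local check at $v\mid p$ via the trace and $p$-th power lemmas is unnecessary because $\mathcal F_{\mathrm{ur}}(v)$ is defined directly by unramifiedness.
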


\begin{proof}
	Let \(H_L\) be the Hilbert class field of the étale \(K\)-algebra \(L\),
	that is, the product of the Hilbert class fields of the fields \(L_i\).
	Class field theory gives
	$
	\operatorname{Gal}(H_L/L)
	\simeq
	\operatorname{Cl}(L).
	$
	Consequently,
	\[
	H^1_{\mathrm{ur}}(L,\mu_p)
	=
	\Hom\!\left(
	\operatorname{Gal}(H_L/L),\mu_p
	\right)
	\simeq
	\Hom\!\left(
	\operatorname{Cl}(L)/p\operatorname{Cl}(L),
	\mu_p
	\right).
	\]
	Here \(H^1_{\mathrm{ur}}(L,\mu_p)\) denotes the subgroup consisting
	of classes whose corresponding Kummer extensions are unramified at
	every finite place.
	
	The exact sequence
	\[
	0
	\longrightarrow J[\Pi]
	\longrightarrow \Res_{L/K}\mu_p
	\xrightarrow{\N_{L/K}}
	\mu_p
	\longrightarrow 0,
	\]
	together with Shapiro's lemma, identifies the unramified Selmer group
	with
	\[
	H^1_{\mathcal F_{\mathrm{ur}}}(K,J[\Pi])
	\simeq
	\Ker\!\left(
	H^1_{\mathrm{ur}}(L,\mu_p)
	\xrightarrow{\operatorname{Cor}_{L/K}}
	H^1_{\mathrm{ur}}(K,\mu_p)
	\right).
	\]
	
	Let
	$
	\iota_{L/K}\colon
	\operatorname{Cl}(K)
	\longrightarrow
	\operatorname{Cl}(L)
	$
	denote extension of ideals. The functoriality of the Artin
	reciprocity map shows that corestriction is adjoint to extension of
	ideals. More precisely, for
	\(\chi\in H^1_{\mathrm{ur}}(L,\mu_p)\) and
	\(c\in\operatorname{Cl}(K)\), one has
	$
	\bigl\langle
	\operatorname{Cor}_{L/K}\chi,c
	\bigr\rangle_K
	=
	\bigl\langle
	\chi,\iota_{L/K}(c)
	\bigr\rangle_L.
	$
	It follows that
	\[
	H^1_{\mathcal F_{\mathrm{ur}}}(K,J[\Pi])
	\simeq
	\Hom\!\left(
	\frac{\operatorname{Cl}(L)/p\operatorname{Cl}(L)}
	{\iota_{L/K}
		\bigl(\operatorname{Cl}(K)/p\operatorname{Cl}(K)\bigr)},
	\mu_p
	\right).
	\]
    There is a canonical isomorphism
	\[
	\frac{\operatorname{Cl}(L)/p\operatorname{Cl}(L)}
	{\iota_{L/K}
		\bigl(\operatorname{Cl}(K)/p\operatorname{Cl}(K)\bigr)}
	\xrightarrow{\sim}
	\Ker(\N_{L/K}).
	\]
	We thus obtain
	\[
	H^1_{\mathcal F_{\mathrm{ur}}}(K,J[\Pi])
	\simeq
	\Hom\!\left(
	\left(
	\operatorname{Cl}(L)/p\operatorname{Cl}(L)
	\right)_{\N\equiv0},
	\mu_p
	\right).
	\]	
	Finally, since $\operatorname{Cl}(L)$ is a finite abelian group, then
	\[
	\dim_{\mathbb F_p}
	\left(
	\operatorname{Cl}(L)/p\operatorname{Cl}(L)
	\right)_{\N\equiv0}
	=
	\dim_{\mathbb F_p}
	\operatorname{Cl}(L)[p]_{\N\equiv0}.
	\]
\end{proof}

\subsection{Bounds for the \(\Pi\)-Selmer group}

We first compare the unit Selmer structure with the unramified Selmer
structure.

\begin{lem}\label{lem:unit-norm-kernel}
	One has
	\[
	\#
	\left(
	\mathcal O_L^\times/
	\mathcal O_L^{\times p}
	\right)_{\N\equiv1}
	=
	p^{\,g[K:\mathbb Q(\zeta_p)]}.
	\]
\end{lem}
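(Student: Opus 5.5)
The plan is to compute both sides by Dirichlet's unit theorem and then cut out the norm-one part using an exact sequence, exactly as in the local computation of Proposition~\ref{dimension}(3). Write $L\simeq\prod_{i=1}^r L_i$ and $\mathcal O_L^\times=\prod_i\mathcal O_{L_i}^\times$. I read $\left(\mathcal O_L^\times/\mathcal O_L^{\times p}\right)_{\N\equiv1}$ as the classes of units whose norm lies in $K^{\times p}$. Since $K^{\times p}\cap\mathcal O_K^\times=\mathcal O_K^{\times p}$, this is the same as requiring the norm to lie in $\mathcal O_K^{\times p}$. Hence there is an exact sequence
\[
0\longrightarrow\left(\mathcal O_L^\times/\mathcal O_L^{\times p}\right)_{\N\equiv1}\longrightarrow\mathcal O_L^\times/\mathcal O_L^{\times p}\xrightarrow{\;\N_{L/K}\;}\mathcal O_K^\times/\mathcal O_K^{\times p}.
\]

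The first step is to show that the last map is surjective. For $u\in\mathcal O_K^\times$, viewed diagonally in $\mathcal O_L^\times$, one has $\N_{L/K}(u)=u^{d}$. Since $p\nmid d$ by Hypothesis~\ref{hyp:local}, choose $e$ with $ed\equiv1\pmod p$. Then $\N_{L/K}(u^e)=u^{ed}\equiv u$ modulo $\mathcal O_K^{\times p}$, which gives surjectivity. Consequently
\[
\dim_{\mathbb F_p}\left(\mathcal O_L^\times/\mathcal O_L^{\times p}\right)_{\N\equiv1}=\dim_{\mathbb F_p}\mathcal O_L^\times/\mathcal O_L^{\times p}-\dim_{\mathbb F_p}\mathcal O_K^\times/\mathcal O_K^{\times p}.
\]

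The second step computes each term. Every $L_i$ contains $K\ni\zeta_p$, so it is totally imaginary. By Dirichlet, $\mathcal O_{L_i}^\times\simeq\mu(L_i)\times\mathbb Z^{[L_i:\mathbb Q]/2-1}$. The group $\mu(L_i)$ is cyclic and contains $\mu_p$, so $\mu(L_i)/\mu(L_i)^p\simeq\mathbb Z/p$. Hence $\dim_{\mathbb F_p}\mathcal O_{L_i}^\times/\mathcal O_{L_i}^{\times p}=[L_i:\mathbb Q]/2$. Summing over $i$ and using $\sum_i[L_i:K]=d$ gives $d[K:\mathbb Q]/2$. The same argument for $K$ gives $[K:\mathbb Q]/2$.

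The difference is therefore $(d-1)[K:\mathbb Q]/2=(d-1)(p-1)[K:\mathbb Q(\zeta_p)]/2=g[K:\mathbb Q(\zeta_p)]$, using $g=(p-1)(d-1)/2$. There is no serious obstacle. The only points needing care are the interpretation of the norm condition, namely that a unit is a $p$-th power in $K$ if and only if it is one in $\mathcal O_K$, and the surjectivity of the norm, which is exactly where $p\nmid d$ is used.
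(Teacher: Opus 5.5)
Your proof is correct and follows the paper's argument: the same exact sequence cutting out the norm-one part, followed by Dirichlet's unit theorem for the totally imaginary fields $L_i$ and $K$, giving $(d-1)[K:\mathbb Q]/2=g[K:\mathbb Q(\zeta_p)]$. You also supply three justifications the paper leaves implicit here: surjectivity of the norm via $\N_{L/K}(u)=u^{d}$ with $p\nmid d$, the fact that $\mu(L_i)$ contributes exactly one dimension, and that a unit which is a $p$-th power in $K$ is already one in $\mathcal O_K$.
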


\begin{proof}
	There is a short exact sequence
	\[
	0
	\longrightarrow
	\left(
	\mathcal O_{L}^{\times}/
	\mathcal O_{L}^{\times p}
	\right)_{\N\equiv 1}
	\longrightarrow
	\mathcal O_{L}^{\times}/
	\mathcal O_{L}^{\times p}
	\xrightarrow{\;\N\;}
	\mathcal O_{K}^{\times}/
	\mathcal O_{K}^{\times p}
	\longrightarrow 0,
	\]
	Since $K$ contains $\zeta_p$, it is totally imaginary. By Dirichlet's unit theorem,
	\[
	\dim_{\mathbb F_p} \mathcal O_K^\times / \mathcal O_K^{\times p} = \frac{[K:\mathbb Q]}{2}.
	\]
	Similarly, because $L \simeq L_1 \times \cdots \times L_r$, one obtains
	\[
	\dim_{\mathbb F_p} \mathcal O_L^\times / \mathcal O_L^{\times p}
	=
	\frac{d[K:\mathbb Q]}{2}.
	\]
	Hence
	\[
	\dim_{\mathbb F_p}
	\left(
	\mathcal O_L^\times / \mathcal O_L^{\times p}
	\right)_{\N\equiv 1}
	=
	\frac{(d-1)[K:\mathbb Q]}{2}.
	\]
	Recall that
	\[
	[K:\mathbb Q] = (p-1)[K:\mathbb Q(\zeta_p)]\text{ and } g = \frac{(d-1)(p-1)}{2}
	\]
	the claimed formula follows.
\end{proof}
\begin{prop}\label{prop:unit-ur-index}
	One has
	\[
	\left[
	H^1_{\mathcal F_{\mathrm{unit}}}(K,J[\Pi]):
	H^1_{\mathcal F_{\mathrm{ur}}}(K,J[\Pi])
	\right]
	\le
	p^{\,g[K:\mathbb Q(\zeta_p)]}.
	\]
\end{prop}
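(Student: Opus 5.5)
The plan is to avoid local-global duality. Instead I will bound $\dim_{\mathbb F_p}H^1_{\mathcal F_{\mathrm{unit}}}(K,J[\Pi])$ directly by an ideal-class argument and then compare it with Proposition~\ref{prop:unramified-classgroup}. The exact sequence I aim for is
\[
\left(\mathcal O_L^\times/\mathcal O_L^{\times p}\right)_{\N\equiv1}
\longrightarrow
H^1_{\mathcal F_{\mathrm{unit}}}(K,J[\Pi])
\xrightarrow{\ c\ }
\operatorname{Cl}(L)[p]_{\N\equiv0}.
\]
With Lemma~\ref{lem:unit-norm-kernel} and Proposition~\ref{prop:unramified-classgroup}, it gives
\[
\dim H^1_{\mathcal F_{\mathrm{unit}}}\le g[K:\mathbb Q(\zeta_p)]+\dim \operatorname{Cl}(L)[p]_{\N\equiv0}
= g[K:\mathbb Q(\zeta_p)]+\dim H^1_{\mathcal F_{\mathrm{ur}}}.
\]
The index bound follows since $H^1_{\mathcal F_{\mathrm{ur}}}\subset H^1_{\mathcal F_{\mathrm{unit}}}$.

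\emph{Construction of $c$.} Take $[\alpha]\in H^1_{\mathcal F_{\mathrm{unit}}}(K,J[\Pi])\subset (L^\times/L^{\times p})_{\N\equiv1}$. For every finite $v$ the localization $[\alpha]_v$ lies in $(\mathcal O_{L_v}^\times/\mathcal O_{L_v}^{\times p})_{\N\equiv1}$. So at every prime $\mathfrak P$ of each $L_i$, $\alpha$ is a unit times a $p$-th power, and $v_{\mathfrak P}(\alpha)\equiv0\pmod p$. Hence $(\alpha)=\mathfrak a^p$ for a unique fractional ideal $\mathfrak a$ of $L$. Changing $\alpha$ by $\beta^p$ changes $\mathfrak a$ by $(\beta)$, so $c([\alpha]):=[\mathfrak a]\in\operatorname{Cl}(L)[p]$ is a well-defined homomorphism.

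\emph{Image of $c$.} Since $\N_{L/K}\alpha=\gamma^p$ with $\gamma\in K^\times$, we get $\N_{L/K}(\mathfrak a)^p=(\gamma)^p$. Unique factorization of ideals then gives $\N_{L/K}(\mathfrak a)=(\gamma)$, which is principal. So the image lies in $\operatorname{Cl}(L)[p]_{\N\equiv0}$.

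\emph{Kernel of $c$.} If $\mathfrak a=(\beta)$ is principal, then $\alpha=u\beta^p$ with $u\in\mathcal O_L^\times$, so $[\alpha]=[u]$. Moreover $\N(u)=\gamma^p\N(\beta)^{-p}$ is a $p$-th power in $K$ and a unit. A unit that is a $p$-th power in $K$ is a $p$-th power of a unit, so $\N(u)\in\mathcal O_K^{\times p}$. Hence the kernel is the image of $(\mathcal O_L^\times/\mathcal O_L^{\times p})_{\N\equiv1}$, whose order is $p^{g[K:\mathbb Q(\zeta_p)]}$ by Lemma~\ref{lem:unit-norm-kernel}. The map from this group need not be injective, but only an upper bound is needed.

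The step needing the most care is matching the two norm conditions. The norm-one subgroup in Lemma~\ref{lem:unit-norm-kernel} is defined via $\N\in\mathcal O_K^{\times p}$, while the Selmer-side condition is $\N\in K^{\times p}$; the unit argument just given bridges them. The other point to check is that the local unit condition really forces $p\mid v_{\mathfrak P}(\alpha)$ at every prime of $L$, including primes above $p$. It does, since $\mathcal O_{L_v}^\times\cdot L_v^{\times p}$ consists exactly of elements with all valuations divisible by $p$. Archimedean places impose no condition because $K$ is totally imaginary.
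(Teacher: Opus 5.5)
Your proposal is correct and follows the same route as the paper. Both arguments send $[\alpha]\in H^1_{\mathcal F_{\mathrm{unit}}}(K,J[\Pi])$ to the class of $\mathfrak a$, where $(\alpha)=\mathfrak a^p$, land in $\operatorname{Cl}(L)[p]_{\N\equiv0}$, bound the kernel by the classes of global units, and conclude with Lemma~\ref{lem:unit-norm-kernel} and Proposition~\ref{prop:unramified-classgroup}. You also supply details the paper leaves implicit: that the image lies in the norm kernel, and that on units the conditions $\N\in K^{\times p}$ and $\N\in\mathcal O_K^{\times p}$ coincide.
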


\begin{proof}
	Let
	\[
	[\alpha]\in
	H^1_{\mathcal F_{\mathrm{unit}}}(K,J[\Pi])
	\subset
	(L^\times/L^{\times p})_{\N\equiv1}.
	\]
	The unit local condition implies that the valuation of \(\alpha\) at
	every finite prime of \(L\) is divisible by \(p\).  Hence there exists a
	fractional ideal \(\mathfrak a\) of \(L\) such that
	\[
	(\alpha)=\mathfrak a^p.
	\]
	Sending \([\alpha]\) to the ideal class of \(\mathfrak a\) gives a
	well-defined homomorphism
	\[
	H^1_{\mathcal F_{\mathrm{unit}}}(K,J[\Pi])
	\longrightarrow
	\operatorname{Cl}(L)[p]_{\N\equiv0}.
	\]
	Its kernel consists of classes represented by global units.  Therefore
	\[
	\#
	H^1_{\mathcal F_{\mathrm{unit}}}(K,J[\Pi])
	\le
	\#
	\operatorname{Cl}(L)[p]_{\N\equiv0}
	\cdot
	\#
	\left(
	\mathcal O_L^\times/
	\mathcal O_L^{\times p}
	\right)_{\N\equiv1}.
	\]
	Using Proposition~\ref{prop:unramified-classgroup} and
	Lemma~\ref{lem:unit-norm-kernel}, we obtain
	\[
	\left[
	H^1_{\mathcal F_{\mathrm{unit}}}(K,J[\Pi]):
	H^1_{\mathcal F_{\mathrm{ur}}}(K,J[\Pi])
	\right]
	\le
	p^{\,g[K:\mathbb Q(\zeta_p)]}.
	\]
\end{proof}

We next compare the unramified Selmer structure with
\(\mathcal F_W\).  For \(v\mid p\), let \(r_v\) be the number of factors
of \(L_v\) over \(K_v\), and let \(V_v\) be the local
\(\mathbb F_p\)-subspace defined in Section~\ref{sec:local-kummer-images}.

\begin{prop}\label{prop:ur-W-index}
	One has
	\[
	\left[
	H^1_{\mathcal F_{\mathrm{ur}}}(K,J[\Pi]):
	H^1_{\mathcal F_W}(K,J[\Pi])
	\right]
	\le
	p^{
		\sum_{v\mid p}
		\left(
		r_v-1-\dim_{\mathbb F_p}W_v
		\right)
	}.
	\]
\end{prop}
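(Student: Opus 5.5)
The plan is to bound the index by the sizes of the local quotients at the places above $p$. The two Selmer structures $\mathcal F_W$ and $\mathcal F_{\mathrm{ur}}$ agree at every place $v\nmid p$, including the archimedean ones. So I will consider the restriction map
\[
\rho\colon H^1_{\mathcal F_{\mathrm{ur}}}(K,J[\Pi])\longrightarrow \prod_{v\mid p}\frac{\mathcal F_{\mathrm{ur}}(v)}{\mathcal F_W(v)},
\qquad [\alpha]\longmapsto \bigl([\alpha]_v \bmod \mathcal F_W(v)\bigr)_{v\mid p}.
\]
By definition of $\mathcal F_W$, the kernel of $\rho$ is exactly $H^1_{\mathcal F_W}(K,J[\Pi])$. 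Hence the index in question is at most $\prod_{v\mid p}\#\bigl(\mathcal F_{\mathrm{ur}}(v)/\mathcal F_W(v)\bigr)$, and it remains to prove that each local quotient has order at most $p^{\,r_v-1-\dim_{\mathbb F_p}W_v}$.

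For the local step at $v\mid p$, I will use the preceding lemma describing $H^1_{\mathcal F_{\mathrm{ur}}}$: an unramified class at $v\mid p$ is represented by an element of $1+\pi^p\mathcal O_{L_v}$. Thus $\mathcal F_{\mathrm{ur}}(v)$ lies inside the image of $\mathcal U_v$ in $(L_v^\times/L_v^{\times p})_{\N\equiv1}$. The map $\mathcal U_v\subset(\mathcal O_{L_v}^\times/\mathcal O_{L_v}^{\times p})_{\N\equiv1}\to(L_v^\times/L_v^{\times p})_{\N\equiv1}$ is injective, since a unit that is a $p$-th power in $L_v^\times$ is the $p$-th power of a unit. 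So we may regard $W_v\subset\mathcal U_v$ as subgroups of the target. We also have $\mathcal F_W(v)=\mathcal F_{\mathrm{ur}}(v)\cap W_v$, so $\mathcal F_{\mathrm{ur}}(v)/\mathcal F_W(v)$ injects into $\mathcal U_v/W_v$. Proposition~\ref{part:U4cardinality} gives $\dim\mathcal U_v=r_v-1$, and Theorem~\ref{Wv} gives $[\mathcal U_v:W_v]=p^{\,r_v-1-\dim_{\mathbb F_p}W_v}$. Multiplying over $v\mid p$ gives the stated bound.

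The main obstacle I expect is justifying $\mathcal F_{\mathrm{ur}}(v)\subset\mathcal U_v$. The cited Cohen--Gras criterion says that $L_v(\sqrt[p]{\alpha})/L_v$ is unramified if and only if $\alpha$ is congruent to a $p$-th power modulo $\pi^p$ times a unit (Hecke's criterion, using $v(\pi^p)=pe/(p-1)$ relative to $\mathbb Q_p(\zeta_p)$). From this I need that the class of $\alpha$ has a representative of the form $1+\pi^p\beta$, which requires the norm condition and the unit condition to be compatible. I would first reduce to the componentwise statement for each $L_{v,i}$ and multiply $\alpha$ by a suitable $p$-th power. The norm condition is then automatic, because the class already lies in $(L_v^\times/L_v^{\times p})_{\N\equiv1}$ and the representative changes only by $p$-th powers.
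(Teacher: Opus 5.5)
Your proposal is correct and follows the same route as the paper: $\mathcal F_W$ and $\mathcal F_{\mathrm{ur}}$ differ only at $v\mid p$, so the index is bounded by $\prod_{v\mid p}\#\bigl(\mathcal F_{\mathrm{ur}}(v)/\mathcal F_W(v)\bigr)$, and each local quotient is bounded using $\dim_{\mathbb F_p}\mathcal U_v=r_v-1$ together with $W_v\subset\mathcal U_v$. You also justify the inclusion $\mathcal F_{\mathrm{ur}}(v)\subset\mathcal U_v$, which the paper's one-line sketch leaves implicit: you use Hecke's criterion, note that the norm condition survives changing the representative by $p$-th powers, and use the injectivity of units modulo $p$-th powers into $L_v^\times/L_v^{\times p}$.
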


\begin{proof}
	Away from \(p\), the two Selmer structures
	\(\mathcal F_{\mathrm{ur}}\) and \(\mathcal F_W\) have the same local
	condition.  Thus the quotient is controlled only by the local conditions
	at places \(v\mid p\).
	
	For such a place \(v\), the local computation of Section~\ref{sec:local-kummer-images} gives
	\[
	\dim_{\mathbb F_p}
	\frac{
		\mathcal F_{\mathrm{ur}}(v)
	}{
		\mathcal F_W(v)
	}
	\le
	r_v-1-\dim_{\mathbb F_p}W_v.
	\]
	Taking the product over all places \(v\mid p\) gives the desired bound.
\end{proof}

\begin{thm}\label{thm:selmer-classgroup-bound}
	Assume Hypothesis~\ref{hyp:local}.  Then
	\[
	\begin{aligned}
		\dim_{\mathbb F_p}
		\operatorname{Cl}(L)[p]_{\N\equiv0}
		-
		\sum_{v\mid p}
		\left(
		r_v-1-\dim_{\mathbb F_p}W_v
		\right)
		\le
		\dim_{\mathbb F_p}
		\operatorname{Sel}_{\Pi}(J)
		\\
		\le
		\dim_{\mathbb F_p}
		\operatorname{Cl}(L)[p]_{\N\equiv0}
		+
		g[K:\mathbb Q(\zeta_p)].
	\end{aligned}
	\]
\end{thm}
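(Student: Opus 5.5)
The plan is to combine the sandwich of Proposition~\ref{prop:selmer-sandwich} with the two index estimates of Propositions~\ref{prop:unit-ur-index} and~\ref{prop:ur-W-index}, using Proposition~\ref{prop:unramified-classgroup} to pin down the middle term. All groups involved are finite $\mathbb F_p$-vector spaces, since they are subgroups of $(L^\times/L^{\times p})_{\N\equiv1}$ cut out by local conditions. Hence the inclusions and index bounds translate directly into inequalities of dimensions.

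\emph{Upper bound.} By Proposition~\ref{prop:selmer-sandwich}, $\operatorname{Sel}_\Pi(J)\subset H^1_{\mathcal F_{\mathrm{unit}}}(K,J[\Pi])$. Therefore
\[
\dim_{\mathbb F_p}\operatorname{Sel}_\Pi(J)\le \dim_{\mathbb F_p}H^1_{\mathcal F_{\mathrm{ur}}}(K,J[\Pi])+\log_p\bigl[H^1_{\mathcal F_{\mathrm{unit}}}:H^1_{\mathcal F_{\mathrm{ur}}}\bigr].
\]
The first term equals $\dim_{\mathbb F_p}\operatorname{Cl}(L)[p]_{\N\equiv0}$ by Proposition~\ref{prop:unramified-classgroup}. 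The second term is at most $g[K:\mathbb Q(\zeta_p)]$ by Proposition~\ref{prop:unit-ur-index}, which in turn rests on Lemma~\ref{lem:unit-norm-kernel}.

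\emph{Lower bound.} Again by Proposition~\ref{prop:selmer-sandwich}, $H^1_{\mathcal F_W}(K,J[\Pi])\subset\operatorname{Sel}_\Pi(J)$. Therefore
\[
\dim_{\mathbb F_p}\operatorname{Sel}_\Pi(J)\ge \dim_{\mathbb F_p}H^1_{\mathcal F_{\mathrm{ur}}}-\log_p\bigl[H^1_{\mathcal F_{\mathrm{ur}}}:H^1_{\mathcal F_W}\bigr].
\]
We then apply Proposition~\ref{prop:unramified-classgroup} to the first term and Proposition~\ref{prop:ur-W-index} to the second.

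\emph{Justifying Proposition~\ref{prop:ur-W-index}.} The only real content lies in this step. Consider the map
\[
H^1_{\mathcal F_{\mathrm{ur}}}\to\prod_{v\mid p}\mathcal F_{\mathrm{ur}}(v)/\mathcal F_W(v).
\]
Its kernel is exactly $H^1_{\mathcal F_W}$, because the local conditions agree away from $p$. So it suffices to bound each local quotient by $p^{\,r_v-1-\dim W_v}$. By the preceding lemma, $\mathcal F_{\mathrm{ur}}(v)$ consists of classes represented by elements of $1+\pi^p\CO_{L_v}$ with norm condition, so $\mathcal F_{\mathrm{ur}}(v)\subseteq\CU_v$. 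It is also natural to check that $W_v\subseteq\mathcal F_{\mathrm{ur}}(v)$, since elements $1-\pi^pT_vw^p$ generate unramified Kummer extensions. Granting that, $\mathcal F_W(v)=W_v$. Proposition~\ref{part:U4cardinality} gives $\dim\CU_v=r_v-1$, and therefore $\dim\mathcal F_{\mathrm{ur}}(v)/W_v\le r_v-1-\dim W_v$.

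I expect the main obstacle to be this local bookkeeping at $v\mid p$. Two things must be verified carefully. The first is the inclusion $\mathcal F_{\mathrm{ur}}(v)\subseteq\CU_v$ inside $\HOv$, which uses the Kummer-theoretic description of unramified classes. The second is that $W_v$, whose elements were shown to lie in $\Im(\delta_v)$ via the formal-group computation, also lies in $\mathcal F_{\mathrm{ur}}(v)$. Once those are in place, the remaining steps are formal dimension counting.
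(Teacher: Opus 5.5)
Your proposal is correct and follows the paper's proof: the sandwich of Proposition~\ref{prop:selmer-sandwich}, with Propositions~\ref{prop:unit-ur-index} and~\ref{prop:unramified-classgroup} giving the upper bound and Propositions~\ref{prop:ur-W-index} and~\ref{prop:unramified-classgroup} giving the lower bound. Your extra justification of Proposition~\ref{prop:ur-W-index} is also correct and spells out what the paper calls ``the local computation of Section~3'': you use $\mathcal F_{\mathrm{ur}}(v)\subseteq\CU_v$ together with $\dim\CU_v=r_v-1$ from Proposition~\ref{part:U4cardinality}, and the inclusion $W_v\subseteq\mathcal F_{\mathrm{ur}}(v)$ that you ``grant'' follows immediately from the Hecke-type criterion in the preceding lemma, since each $1-\pi^pT_vw^p$ lies in $1+\pi^p\CO_{L_v}$ (in fact you do not need it at all, because $\mathcal F_{\mathrm{ur}}(v)/(\mathcal F_{\mathrm{ur}}(v)\cap W_v)$ already embeds in $\CU_v/W_v$).
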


\begin{proof}
	By Proposition~\ref{prop:selmer-sandwich},
	\[
	H^1_{\mathcal F_W}(K,J[\Pi])
	\subset
	\operatorname{Sel}_{\Pi}(J)
	\subset
	H^1_{\mathcal F_{\mathrm{unit}}}(K,J[\Pi]).
	\]
	The upper bound follows from Proposition~\ref{prop:unit-ur-index} and
	Proposition~\ref{prop:unramified-classgroup}:
	\[
	\begin{aligned}
		\dim_{\mathbb F_p}
		\operatorname{Sel}_{\Pi}(J)
		&\le
		\dim_{\mathbb F_p}
		H^1_{\mathcal F_{\mathrm{unit}}}(K,J[\Pi])          \\
		&\le
		\dim_{\mathbb F_p}
		H^1_{\mathcal F_{\mathrm{ur}}}(K,J[\Pi])
		+
		g[K:\mathbb Q(\zeta_p)]                              \\
		&=
		\dim_{\mathbb F_p}
		\operatorname{Cl}(L)[p]_{\N\equiv0}
		+
		g[K:\mathbb Q(\zeta_p)].
	\end{aligned}
	\]
	Similarly, Proposition~\ref{prop:ur-W-index} gives
	\[
	\begin{aligned}
		\dim_{\mathbb F_p}
		\operatorname{Sel}_{\Pi}(J)
		&\ge
		\dim_{\mathbb F_p}
		H^1_{\mathcal F_W}(K,J[\Pi])                       \\
		&\ge
		\dim_{\mathbb F_p}
		H^1_{\mathcal F_{\mathrm{ur}}}(K,J[\Pi])
		-
		\sum_{v\mid p}
		\left(
		r_v-1-\dim_{\mathbb F_p}W_v
		\right)                                             \\
		&=
		\dim_{\mathbb F_p}
		\operatorname{Cl}(L)[p]_{\N\equiv0}
		-
		\sum_{v\mid p}
		\left(
		r_v-1-\dim_{\mathbb F_p}W_v
		\right).
	\end{aligned}
	\]
	This proves the theorem.
\end{proof}

\subsection{Examples}
The following examples have been computed using SageMath\cite{sagemath} and Magma\cite{MR1484478}, all examples below suppose that \( K = \mathbb{Q}(\zeta_3) \).
\begin{ex}[Lower bound attained]
	For $C: y^3 = x^2 - 822$, let $L = K(\sqrt{822})$. A class-group computation gives
	\[
	\dim_{\mathbb F_3} \operatorname{Cl}(L)[3]_{\N\equiv 0} = 1.
	\]
	At the unique place $\pi \mid 3$, we have $r_\pi = 1$ and $W_\pi = 0$, hence
	\[
	r_\pi - 1 - \dim_{\mathbb F_3} W_\pi = 0.
	\]
	Moreover,
	\[
	\dim_{\mathbb F_3} \operatorname{Sel}_{\Pi}(J/K) = 1
	\]
	(cf.~\cite[Example~1]{JhaMajumdarShingavekar}). Thus
	\[
	\dim_{\mathbb F_3} \operatorname{Sel}_{\Pi}(J/K)
	=
	\dim_{\mathbb F_3} \operatorname{Cl}(L)[3]_{\N\equiv 0}
	=
	1,
	\]
	attaining the lower bound in Theorem~\ref{thm:selmer-classgroup-bound}.
\end{ex}

\begin{ex}[Upper bound attained]
	For $C: y^3 = x^2 - 359$, let $L = K(\sqrt{359})$. We have
	\[
	\dim_{\mathbb F_3} \operatorname{Cl}(L)[3]_{\N\equiv 0} = 2,
	\qquad
	r_\pi = 1, \quad W_\pi = 0,
	\]
	so $r_\pi - 1 - \dim_{\mathbb F_3} W_\pi = 0$. Also,
	\[
	\dim_{\mathbb F_3} \operatorname{Sel}_{\Pi}(J/K) = 3
	\]
	(see \cite[Example~2]{JhaMajumdarShingavekar}). Therefore
	\[
	\dim_{\mathbb F_3} \operatorname{Sel}_{\Pi}(J/K)
	=
	\dim_{\mathbb F_3} \operatorname{Cl}(L)[3]_{\N\equiv 0} + g
	=
	3,
	\]
	attaining the upper bound in Theorem~\ref{thm:selmer-classgroup-bound}.
\end{ex}

\begin{ex}
	For $C: y^3 = x(x^4 + x^2 + 2)$, we have
	\[
	\dim_{\mathbb F_3} \operatorname{Cl}(L)[3]_{\N\equiv 0} = 0,
	\qquad
	r_\pi = 2, \quad W_\pi = 0,
	\]
	hence $r_\pi - 1 - \dim_{\mathbb F_3} W_\pi = 1$. A direct SageMath computation gives
	\[
	\dim_{\mathbb F_3} \operatorname{Sel}_{\Pi}(J/K) = 2.
	\]
	Theorem~\ref{thm:selmer-classgroup-bound} yields
	\[
	0 \leq \dim_{\mathbb F_3} \operatorname{Sel}_{\Pi}(J/K) \leq 4.
	\]
\end{ex}

\bibliographystyle{alpha}
\bibliography{reference}
\end{document}